\theoremstyle{plain}
\newtheorem{thm}{Theorem}
\newtheorem{cor}{Corollary}
\newtheorem{prop}{Proposition}
\theoremstyle{remark}
\newtheorem*{remark}{Remark}
\numberwithin{thm}{section}
\numberwithin{cor}{section}
\numberwithin{prop}{section}
\begin{document}
\title{Quasi-shuffle products revisited}
\author{Michael E. Hoffman and Kentaro Ihara\\
\small U. S. Naval Academy\\[-0.8 ex]
\small Annapolis, MD 21402 USA\\[-0.8 ex]
\small and\\[-0.8ex]
\small Kindai University\\[-0.8ex]
\small 3-4-1 Kowakae Higashi-Osaka 577-8502 Japan\\[-0.8ex]
\small \texttt{meh@usna.edu}\\[-0.8ex]
\small \texttt{k-ihara@math-kindai.ac.jp}}
\date{\small 22 March 2017\\
\small Keywords: Quasi-shuffle product, multiple zeta values,
Hopf algebra, infinitesimal Hopf algebra\\
\small 2010 AMS Classifications:  16T30, 11M32}
\maketitle
\def\sh{\shuffle}
\def\la{\lambda}
\def\ep{\epsilon}
\def\zt{\zeta}
\def\ga{\gamma}
\def\om{\omega}
\def\si{\sigma}
\def\Ga{\Gamma}
\def\pit{\frac{\pi}2}
\def\pif{\frac{\pi}4}
\def\De{\Delta}
\def\rDe{\tilde\De}
\def\Si{\Sigma}
\def\op{\diamond}
\def\D{\mathcal D}
\def\N{\mathbb N}
\def\Z{\mathbb Z}
\def\Q{\mathbb Q}
\def\R{\mathbb R}
\def\C{\mathbb C}
\def\CC{\mathcal C}
\def\H{\mathfrak H}
\def\HH{\mathcal H}
\def\E{\mathfrak E}
\def\P{\mathcal P}
\def\<{\langle}
\def\>{\rangle}
\def\kA{k\langle A\rangle}
\def\id{\operatorname{id}}
\def\Hom{\operatorname{Hom}}
\def\L{\operatorname{Li}}
\def\QS{\operatorname{QSym}}
\def\Sy{\operatorname{Sym}}
\def\sgn{\operatorname{sgn}}
\def\sech{\operatorname{sech}}
\newcommand{\stone}[2]{\genfrac{[}{]}{0pt}{}{#1}{#2}}
\begin{abstract}
Quasi-shuffle products, introduced by the first author, have been
useful in studying multiple zeta values and some of their analogues
and generalizations.  The second author, together with Kajikawa, Ohno, 
and Okuda, significantly extended the definition of quasi-shuffle 
algebras so it could be applied to multiple $q$-zeta values.  
This article extends some of the algebraic machinery of the first 
author's original paper to the more general definition, and 
demonstrates how various algebraic formulas in the quasi-shuffle
algebra can be obtained in a transparent way.
Some applications to multiple zeta values, interpolated multiple
zeta values, multiple $q$-zeta values, and multiple polylogarithms 
are given.
\end{abstract}
\section{Introduction}
This article revisits the construction of quasi-shuffle products 
in \cite{H2}.
In \cite{IKOO} the construction of \cite{H2} was put in a more
general setting that had two chief advantages:  (i) it simultaneously
applied to multiple zeta and multiple zeta-star values and
their extensions; and (ii) it could be applied to the $q$-series
version of multiple zeta values studied in \cite{B}.
Here we show that some of the algebraic machinery developed in \cite{H2},
particularly the coalgebra structure and linear functions induced by
formal power series (not considered in \cite{IKOO}),
can be carried over to the more general setting and used to make
the calculations in the quasi-shuffle algebra, including many of \cite{IKOO}, 
more transparent.
We also describe some applications of quasi-shuffle algebras
not considered in \cite{IKOO}, including applications to the 
interpolated multiple zeta values introduced in \cite{Y1}
\par
The original quasi-shuffle product was inspired by the multiplication
of multiple zeta values, i.e.,
\begin{equation}
\label{mzv}
\sum_{n_1>\dots>n_k\ge 1}\frac1{n_1^{i_1}\cdots n_k^{i_k}} ,
\end{equation}
with $i_1>1$ to insure convergence.
One can associate to the series (\ref{mzv}) the monomial $z_{i_1}\cdots z_{i_k}$
in the noncommuting variables $z_1,z_2,\dots$; then we write the
value (\ref{mzv}) as $\zt(z_{i_1}\cdots z_{i_k})$.
For any monomials $w=z_iw'$ and $v=z_jv'$, define the product $w*v$
recursively by
\begin{equation}
\label{recur}
w*v=z_i(w'*v)+z_j(w*v')+z_{i+j}(w'*v') .
\end{equation}
Then $\zt(w)\zt(v)=\zt(w*v)$, where we think of $\zt$ as a linear
function on monomials.  As we shall see in the next section, the
recursive rule (\ref{recur}) is a quasi-shuffle product on 
monomials in the $z_i$ derived from the product $\op$ on the vector
space of $z_i$'s given by $z_i\op z_j=z_{i+j}$.
\par
In \cite{B} the multiple $q$-zeta values were defined as
\begin{equation}
\label{qmzv}
\sum_{n_1>\dots>n_k\ge 1}\frac{q^{(i_1-1)n_1}\cdots q^{(i_k-1)n_k}}
{[n_1]_q^{i_1}\cdots [n_k]_q^{i_k}} ,
\end{equation}
where $[n]_q=1+q+\dots+q^{n-1}=(1-q^n)/(1-q)$. 
If we denote (\ref{qmzv}) by $\zt_q(z_{i_1}\cdots z_{i_k})$, then
to have $\zt_q(w)\zt_q(v)=\zt_q(w*v)$
the recursion (\ref{recur}) must be significantly modified:
in place of $z_i\op z_j=z_{i+j}$ we must have
\[
z_i\op z_j=z_{i+j}+(1-q)z_{i+j-1} .
\]
This means that to have a theory of quasi-shuffle algebras that
applies to multiple $q$-zeta values, two restrictions in the
original construction of \cite{H2} must be removed:  that 
the product $a\op b$ of two letters be a letter, and that 
the operation $\op$ preserve a grading.  This was done in
\cite{IKOO}.  The same paper also addressed the relation between
multiple zeta values (\ref{mzv}) and the multiple zeta-star values
\begin{equation}
\label{mzvstar}
\zt^{\star}(z_{i_1}\cdots z_{i_k})=
\sum_{n_1\ge\dots\ge n_k\ge 1}\frac1{n_1^{i_1}\cdots n_k^{i_k}} .
\end{equation}
This relation can be expressed in terms of a linear isomorphism
(here denoted $\Si$) from the vector space of monomials in the
$z_i$'s to itself.
The function $\Si$ acts on monomials as, e.g.,
\[
\Si(z_iz_jz_k)=z_iz_jz_k+(z_i\op z_j)z_k+z_i(z_j\op z_k)+z_i\op z_j \op z_k
\]
and then $\zt^{\star}(w)=\zt(\Si(w))$.
If we define analogously multiple $q$-zeta-star values
$\zt_q^{\star}(w)$, then $\zt_q^{\star}(w)=\zt_q(\Si(w))$.
\par 
Important properties of $\Si$ were established in \cite{IKOO},
though some of the inductive proofs are tedious.  Here we
make use of two aspects of the theory developed in \cite{H2}
not used in \cite{IKOO}.  First, for any formal power series 
\[
f=c_1t + c_2 t^2+\cdots ,
\]
it is possible to define a linear function (but not necessarily an 
algebra homomorphism) $\Psi_f$ from (the vector space underlying) the 
quasi-shuffle algebra to itself.
This process respects composition (i.e., $\Psi_{f\circ g}=\Psi_f\Psi_g$), 
and many important isomorphisms can be represented this way,
e.g., $\Si=\Psi_{\frac{t}{1-t}}$.
Second, the quasi-shuffle algebra together with the ``deconcatenation''
coproduct is a Hopf algebra:  in fact, it turns out that its antipode
is closely related to $\Si$.
\par
This paper is organized as follows.
In \S2 we define the quasi-shuffle products $*$ and $\star$ on
the vector space $\kA$, where $A$ is a countable set of letters,
and $kA$ is equipped with a commutative product $\op$.
Then in \S3 we explain how to obtain linear isomorphisms from
$\kA$ to itself using formal power series:
as noted above, this gives a useful representation of $\Si$.
In \S4 we describe three Hopf algebras:  the ordinary Hopf
algebras $(\kA,*,\De)$ and $(\kA,\star,\De)$, and the infinitesimal
Hopf algebra $(\kA,\op,\rDe)$, where $\De$ is deconcatenation,
$\rDe(w)=\De(w)-w\otimes 1-1\otimes w$, and $\op$ is an extension
of the original operation on $A$ to a (noncommutative) product
on $\kA$.  Each of these Hopf algebras is associated with a
represention of $\Si$ via the antipode.  In \S5 we apply the
machinery of the preceding two sections to obtain many of
the algebraic formulas of \cite{IKOO} (and generalizations
thereof) in a transparent way.  Finally, in \S6 we illustrate
some of these algebraic formulas (particularly Corollary \ref{expthm}
below in each case) for five different homomorphic images of 
quasi-shuffle algebras.
\section{The quasi-shuffle products}
We start with a field $k$ containing $\Q$, and a countable set $A$
of ``letters''.  We let $kA$ be the vector space with $A$
as basis, and suppose there is an associative and commutative 
product $\op$ on $kA$.
\par
Now let $\kA$ be the noncommutative polynomial algebra over $A$.
So $\kA$ is the vector space over $k$ generated by ``words'' 
(monomials) $a_1a_2\cdots a_n$, with $a_i\in A$:  a word
$w=a_1\cdots a_n$ has length $\ell(w)=n$.
(We think of 1 as the empty word, and set $\ell(1)=0$.)
Following \cite{IKOO}, we define two $k$-bilinear products $*$ and
$\star$ on $\kA$ by making $1\in\kA$ the identity element for each 
product, and requiring that $*$ and $\star$ satisfy the relations
\begin{align}
\label{ast}
aw*bv &=a(w*bv)+b(aw*v)+(a\op b)(w*v)\\
\label{star}
aw\star bv &=a(w\star bv)+b(aw\star v)-(a\op b)(w\star v) 
\end{align}
for all $a,b\in A$ and all monomials $w,v$ in $\kA$.
As in \cite{H2} we have the following result.
\begin{thm}
\label{algebra}
If equipped with either the product $*$ or the product $\star$,
the vector space $\kA$ becomes a commutative algebra.
\end{thm}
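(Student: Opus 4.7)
The plan is to establish commutativity and associativity for each of $*$ and $\star$ (the element $1$ is an identity by definition). Both properties follow by induction on the total length of the words involved, and the argument for $\star$ is identical to the one for $*$ up to a sign change on the $a\op b$ term, so I will describe the $*$-case only.

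For commutativity, induct on $\ell(w)+\ell(v)$. If either word is $1$ the claim is immediate. Otherwise write $w=aw'$ and $v=bv'$ and expand both $aw'*bv'$ and $bv'*aw'$ using (\ref{ast}). The three summands produced on each side are matched term by term: the $a$-headed and $b$-headed summands swap places via the inductive hypothesis applied to shorter pairs, and the final summand matches because $a\op b=b\op a$, which is part of the hypothesis on $\op$.

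For associativity, induct on $\ell(u)+\ell(v)+\ell(w)$, with trivial base cases when one argument is $1$. Writing $u=au'$, $v=bv'$, $w=cw'$, apply (\ref{ast}) to expand $(u*v)*w$ and $u*(v*w)$; a second application of (\ref{ast}) inside each summand produces, after grouping, nine terms per side, organized by which of the three leading letters $a$, $b$, $c$, or which $\op$-combination of two of them, leads the outer word. Each term on the left then pairs with the term on the right sharing the same outer leading factor, with equality of the inner $*$-product supplied by the inductive hypothesis on a strictly shorter triple. The only ingredient beyond induction is $(a\op b)\op c=a\op(b\op c)$, which is the assumed associativity of $\op$ on $kA$. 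The $\star$-case goes through identically because the extra minus sign on each $\op$-term appears symmetrically on both sides of the proposed identity and is therefore preserved under the same pairing.

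The main obstacle is purely organizational: the associativity expansion produces a profusion of terms, and one must choose the matching of left-hand and right-hand summands carefully enough that each matched pair reduces to an instance of the inductive hypothesis. Once the pairing is arranged by outer leading factor (one of $a$, $b$, $c$, $a\op b$, $b\op c$, $a\op c$, $a\op b\op c$), the verification is routine, and the argument is essentially that of \cite{H2} adapted to the setting in which $\op$ need neither land in $A$ nor preserve any grading.
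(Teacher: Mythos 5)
Your proposal is correct and follows essentially the same route as the paper: induction on total word length for both commutativity and associativity, reducing everything to the commutativity and associativity of $\op$ on $kA$, with the $\star$ case differing only by signs on the $\op$-terms. The one organizational nuance you gloss over is that the two nine-term expansions in the associativity step do not pair off one-to-one by outer leading factor (for instance, $u_1*(u_2*u_3)$ has three $a$-led terms but only one $c$-led term, and the reverse holds for $(u_1*u_2)*u_3$); one must first recombine each such triple into a single product, e.g.\ $a(w*(bv*cy))$ on one side and $c((aw*bv)*y)$ on the other, by running the defining recursion backwards, and only then invoke the induction hypothesis --- which is exactly how the paper's displayed computation resolves.
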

\begin{proof}
We prove the result for $*$, as the proof for $\star$ is almost
identical.  It suffices to show that $*$ is commutative and associative.
For commutativity, it is enough to show that $u_1*u_2=u_2*u_1$ 
for words $u_1,u_2$:  we proceed by induction on $\ell(u_1)+\ell(u_2)$.  
This is trivial if either $u_1$ or $u_2$ is empty, so write 
$u_1=aw$ and $u_2=bv$ for $a,b\in A$ and words $w,v$.  
Then by Eq. (\ref{ast}),
\[
u_1*u_2-u_2*u_1=(a\op b)(w*v)-(b\op a)(v*w),
\]
and the right-hand side is zero by the induction hypothesis and
the commutativity of $\op$.  
\par
Similarly, to prove associativity it is enough to show that 
$u_1*(u_2*u_3)=(u_1*u_2)*u_3$ for words $u_1,u_2,u_3$, and this can 
be done by induction on $\ell(u_1)+\ell(u_2)+\ell(u_3)$.
The required identity is trivial if any of $u_1,u_2,u_3$ is 1,
so we can write $u_1=aw$, $u_2=bv$, and $u_3=cy$ for $a,b,c\in A$
and words $w,v,y$.  Then
\begin{multline*}
u_1*(u_2*u_3)-(u_1*u_2)*u_3=
a(w*b(v*cy)+b(aw*(v*cy))+(a\op b)(w*(v*cy))\\
+a(w*c(bv*y))+c(aw*(bv*y))+(a\op c)(w*(bv*y))\\
+a(w*(b\op c)(v*y))+(b\op c)(aw*(v*y))+(a\op(b\op c))(w*(v*y))\\
-a((w*bv)*cy)-c(a(w*bv)*y)-(a\op c)((w*bv)*y)\\
-b((aw*v)*cy)-c(b(aw*v)*y)-(b\op c)((aw*v)*y)\\
-(a\op b)((w*v)*cy)-c((a\op b)(w*v)*y)-((a\op b)\op c)((w*v)*y)=\\
a(w*(bv*cy))+c(aw*(bv*y))-a((w*bv)*cy)-c((aw*bv)*y)=0,
\end{multline*}
by the induction hypothesis and the associativity of $\op$.
\end{proof}
If the product $\op$ is identically zero, then $*$ and $\star$
coincide with the usual shuffle product $\sh$ on $\kA$.  We call
both $*$ and $\star$ quasi-shuffle products.
\par
We note that $\op$ can be extended to a product of on all of
$\kA$ by defining $1\op w=w\op 1=w$ for all words $w$, and
$w\op v=w'(a\op b)v'$ for nonempty words $w=w'a$ and $v=bv'$
(where $a,b$ are letters).  Then $(\kA,\op)$ is a noncommutative
algebra that contains the commutative subalgebra $k1+kA$.
\section{Linear maps induced by power series}
In this section we show how any formal power series $f\in tk[[t]]$
induces a linear function $\Psi_f$ from $\kA$ to itself, and
also how various isomorphisms among the algebras $(\kA,\sh)$,
$(\kA,*)$ and $(\kA,\star)$ can be recognized as being of the
form $\Psi_f$.  This has been treated in an operadic context by
Yamamoto \cite{Y2}.
\par
Let $a_1,a_2,\dots a_n\in A$.  If $w=a_1a_2\cdots a_n$, and
$I=(i_1,\dots,i_m)$ is a composition of $n$ (i.e., a sequence
of positive integers whose sum is $n$), define (as in \cite{H2})
\begin{equation}
\label{Idef}
I[w]=(a_1\op\cdots\op a_{i_1})(a_{i_1+1}\op\cdots\op
a_{i_1+i_2})\cdots(a_{i_1+\dots+i_{m-1}+1}\op\cdots\op a_n) .
\end{equation}
We call $n=|I|$ the weight of the composition of $I$, and $m=\ell(I)$
its length.
Note that the parentheses in Eq. (\ref{Idef}) are not really
necessary:  the right-hand side is simultaneously an $m$-fold
product in the concatenation algebra $\kA$ and a product of length
\[
1+(i_1-1)+(i_2-1)+\dots+(i_m-1)=n+1-m
\]
in the algebra $(\kA,\op)$.  If we define
\[
I\<w\>=a_1\cdots a_{i_1}\op a_{i_1+1}\cdots a_{i_1+i_2}\op \cdots \op
a_{i_1+\dots+i_{m-1}+1}\cdots a_n
\]
so that, e.g.,
\[
(2,1,2)[a_1a_2a_3a_4a_5]=a_1\op a_2a_3a_4\op a_5=(1,3,1)\<a_1a_2a_3a_4a_5\> ,
\]
then $I[w]=I^*\<w\>$ defines an involution $*$ on compositions such that
$|I^*|=|I|$ and $\ell(I^*)=|I|+1-\ell(I)$.
\par
Now we consider formal power series
\begin{equation}
\label{eff}
f=c_1t+c_2t^2+c_3t^3+\cdots\in tk[[t]].
\end{equation}
Any two ``functions'' $f,g\in tk[[t]]$, say
$f=\sum_{i\ge 1} c_i t^i$ and $g=\sum_{i\ge 1} d_i t^i$,
have a ``functional composition''
\begin{multline*}
f\circ g=\sum_{i\ge 1}c_ig^i =
c_1(d_1t+d_2t^2+\cdots)+c_2(d_1t+d_2t^2+\cdots)^2+\cdots\\
=c_1d_1t + (c_1d_2+c_2d_1^2)t^2+\cdots\in tk[[t]] .
\end{multline*}
Writing $[t^i]f$ for the coefficient of $t^i$ in $f\in k[[t]]$,
it is evident that
\begin{equation}
\label{comp}
[t^k]f\circ g=\sum_{j=1}^k [t^j]f[t^k]g^j .
\end{equation}
Clearly $f=t$ is the identity for functional composition; and if 
$\P\subset tk[[t]]$ is the set of power series invertible under
functional composition, it is not hard to see that $f\in\P$ if 
and only if $[t]f\ne 0$.
\par
For $f$ given by Eq. (\ref{eff}), we define the $k$-linear map
$\Psi_f:\kA\to\kA$ by $\Psi_f(1)=1$ and
\begin{equation}
\label{phif}
\Psi_f(w)=\sum_{I=(i_1,\dots,i_m)\in\CC(\ell(w))}c_{i_1}\cdots c_{i_m}I[w] ,
\end{equation}
for nonempty words $w$, where $\CC(n)$ is the set of compositions of $n$.  
The following result generalizes Lemma 2.4 of \cite{H2}.
\begin{thm}
\label{funct}
For $f,g\in tk[[t]]$, $\Psi_f\Psi_g=\Psi_{f\circ g}$.
\end{thm}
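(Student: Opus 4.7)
The plan is to expand both sides in terms of the basis of words, show they agree coefficient by coefficient, and reduce the identity to the coefficient-level formula (\ref{comp}) for $f\circ g$.

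First, I would write $f=\sum_i c_i t^i$ and $g=\sum_j d_j t^j$ and unfold:
\[
\Psi_f\Psi_g(w)=\sum_{J\in\CC(\ell(w))}d_{j_1}\cdots d_{j_{\ell(J)}}\,\Psi_f(J[w])
=\sum_{J}\sum_{I\in\CC(\ell(J))} d_{j_1}\cdots d_{j_{\ell(J)}}\,c_{i_1}\cdots c_{i_{\ell(I)}}\,I[J[w]].
\]
The first key step is to identify $I[J[w]]$ as $K[w]$ for an appropriate composition $K$ of $\ell(w)$. This is the combinatorial heart of the argument: if $J=(j_1,\dots,j_n)$ and $I=(i_1,\dots,i_m)$ with $i_1+\cdots+i_m=n=\ell(J)$, then applying $I$ to the word $J[w]$ groups its letters in blocks of sizes $i_1,\dots,i_m$, which in turn corresponds to grouping the original letters of $w$ in blocks of sizes $k_l=j_{i_1+\cdots+i_{l-1}+1}+\cdots+j_{i_1+\cdots+i_l}$. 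Since the operation $\op$ on $kA$ is associative, iterated $\op$-products may be reassociated, giving $I[J[w]]=K[w]$ where $K=(k_1,\dots,k_m)$.

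Next I would reorganize the double sum by fixing $K=(k_1,\dots,k_m)\in\CC(\ell(w))$ and summing over all pairs $(I,J)$ producing this $K$. Such pairs correspond bijectively to a choice, for each part $k_l$, of a composition $J^{(l)}=(j^{(l)}_1,\dots,j^{(l)}_{i_l})$ of $k_l$; the concatenation of the $J^{(l)}$ is $J$ and the sequence of their lengths is $I$. The coefficient factors accordingly:
\[
\sum_{(I,J)\mapsto K} c_{i_1}\cdots c_{i_m}\,d_{j_1}\cdots d_{j_n}
=\prod_{l=1}^{m}\sum_{i\ge 1}c_i\sum_{\substack{(r_1,\dots,r_i)\\ r_1+\cdots+r_i=k_l}} d_{r_1}\cdots d_{r_i}
=\prod_{l=1}^{m}\sum_{i\ge 1}c_i\,[t^{k_l}]g^i,
\]
using that $[t^{k_l}]g^i$ is exactly the sum of $d_{r_1}\cdots d_{r_i}$ over compositions of $k_l$ with $i$ parts.

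By formula (\ref{comp}), each inner factor equals $[t^{k_l}](f\circ g)$. Writing $f\circ g=\sum_k e_k t^k$, we therefore obtain
\[
\Psi_f\Psi_g(w)=\sum_{K=(k_1,\dots,k_m)\in\CC(\ell(w))}e_{k_1}\cdots e_{k_m}\,K[w]=\Psi_{f\circ g}(w),
\]
as required; the empty-word case $w=1$ is handled by the convention $\Psi_h(1)=1$ for all $h$. The main obstacle is the bookkeeping in the second step, namely justifying the bijection between pairs $(I,J)$ with $|J|=\ell(w)$, $|I|=\ell(J)$, and pairs $(K,\text{refinement of each part of }K)$; once one notices that a refinement of $K$ by compositions of each part is exactly the extra data needed to recover $(I,J)$, the identity follows cleanly from (\ref{comp}).
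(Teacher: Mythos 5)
Your proof is correct and takes essentially the same route as the paper: both expand $\Psi_f\Psi_g(w)$ over pairs of compositions, identify the iterated bracket $I[J[w]]$ with $K[w]$ for the composed composition $K$ (using associativity of $\op$), regroup the sum by $K$ via the bijection with part-by-part refinements, and reduce the resulting factored coefficient identity to Eq.~(\ref{comp}). The only difference is notational (your $I$ and $J$ play the roles of the paper's $J$ and $I$, and the paper phrases the regrouping as the condition $JI=K$ rather than as a refinement bijection).
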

\begin{proof}  Since
\[
\Psi_g(w)=\sum_{I=(i_1,\dots,i_m)\in\CC(\ell(w))}[t^{i_1}]g\cdots
[t^{i_m}]g I[w]
\]
we have
\begin{multline*}
\Psi_f\Psi_g(w)=\\
\sum_{m=1}^{\ell(w)}
\sum_{J=(j_1,\dots,j_l)\in\CC(m)}\sum_{I=(i_1,\dots,i_m)\in\CC(\ell(w))}
[t^{j_1}]f\cdots [t^{j_l}]f[t^{i_1}]g\cdots [t^{i_m}]g J[I[w]] .
\end{multline*}
\par
On the other hand,
\[
\Psi_{f\circ g}(w)=\sum_{K=(k_1,\dots,k_l)\in\CC(\ell(w))}[t^{k_1}]f\circ g\cdots
[t^{k_l}]f\circ g K[w] ,
\]
so we need to show that, for all compositions $K=(k_1,\dots,k_l)\in\CC(n)$,
\begin{multline}
\label{equ}
[t^{k_1}]f\circ g \cdots [t^{k_l}]f\circ g=\\
\sum_{m=1}^n \sum_{J=(j_1,\dots,j_l)\in\CC(m)}
\sum_{\substack{I=(i_1,\dots,i_m)\in\CC(n) \\ JI=K}}
[t^{j_1}]f\cdots [t^{j_l}]f[t^{i_1}]g\cdots [t^{i_m}]g 
\end{multline}
where 
\[
JI=(i_1+\dots+i_{j_1},i_{j_1+1}+\dots+i_{j_1+j_2},\dots,
i_{j_1+\dots+j_{l-1}+1}+\dots+i_m)
\] 
is the obvious ``composition'' of the compositions 
$I=(i_1,\dots,i_m)$ and $J=(j_1,\dots,j_l)$, with $J\in\CC(m)$.
Now the right-hand side of Eq. (\ref{equ}) can be rewritten
\begin{multline*}
\sum_{J=(j_1,\dots,j_l)}\sum_{\substack{I=(i_1,\dots,i_{|J|})\in\CC(n) \\ JI=K}}
\prod_{s=1}^l [t^{j_s}]f[t^{i_{j_1+\dots+j_{s-1}+1}}]g\cdots
[t^{i_{j_1+\dots+j_s}}]g \\
=\sum_{J=(j_1,\dots,j_l)}\prod_{s=1}^l [t^{j_s}]f[t^{k_s}]g^{j_s} ,
\end{multline*}
from which Eq. (\ref{equ}) follows by use of (\ref{comp}).
\end{proof}
\subsection{The isomorphisms $T$ and $\Si$}
From the preceding result, $\Psi_f$ is an isomorphism when $f\in\P$.
We now consider some particular examples.
First, it is immediate from Eq. (\ref{phif}) that $\Psi_t$ is the identity 
homomorphism of $\kA$.  
Now, following \cite{IKOO}, consider
\[
T=\Psi_{-t}\quad\text{and}\quad \Si=\Psi_{\frac{t}{1-t}} .
\]
(The function we call $\Si$ is written $S$ in \cite{IKOO}, but
as in \cite{H2} we wish to reserve $S$ for a Hopf algebra
antipode.)
For words $w$ of $\kA$, $T(w)=(-1)^{\ell(w)}w$ and
\[
\Si(w)=\sum_{I\in\CC(\ell(w))}I[w] .
\]
Evidently $T$ is an involution, and $\Si^{-1}=\Psi_{\frac{t}{1+t}}$
is given by
\[
\Si^{-1}(w)=\sum_{I\in\CC(\ell(w))}(-1)^{\ell(w)-\ell(I)}I[w] .
\]
We note that, for letters $a$ and words $w\ne 1$,
\begin{align}
\label{Tind}
T(aw)&=-aT(w)\\
\label{Sind}
\Si(aw)&= a\Si(w) + a\op \Si(w)\\
\label{Siind}
\Si^{-1}(aw)&=a\Si^{-1}(w)-a\op \Si^{-1}(w)
\end{align}
and (as in \cite{IKOO}) the property (\ref{Sind}) can be used to define $\Si$.
The functions $\Si$ and $T$ are not inverses, but we have the following result.
\begin{cor} The functions $\Si$ and $T$ satisfy $T\Si T=\Si^{-1}$, and
(if the product $\op$ is nonzero) generate the infinite dihedral group.
\end{cor}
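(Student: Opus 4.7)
My plan is to reduce everything to Theorem \ref{funct} and then compute with formal power series, which is far cleaner than working with the action on words directly.

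First, I would show $T\Si T = \Si^{-1}$ by composing the three defining power series. By Theorem \ref{funct},
\[
T\Si T = \Psi_{-t}\Psi_{t/(1-t)}\Psi_{-t} = \Psi_{h},
\]
where $h$ is the functional composition $(-t)\circ\bigl(t/(1-t)\bigr)\circ(-t)$. Substituting $-t$ on the right into $t/(1-t)$ gives $-t/(1+t)$, and applying the outer $-t$ flips the sign to yield $h(t) = t/(1+t)$. Since $\Si^{-1} = \Psi_{t/(1+t)}$ was already noted in the text, the first claim follows.

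Next I would verify the infinite dihedral relation. The presentation of the infinite dihedral group is $\langle r, s \mid s^2 = 1,\ srs = r^{-1}\rangle$ with $r$ of infinite order; take $s = T$ and $r = \Si$. The relation $T^2 = \id$ is immediate from Theorem \ref{funct} since $(-t)\circ(-t) = t$ and $\Psi_t = \id$, and the braiding relation $T\Si T = \Si^{-1}$ was just proved. So everything reduces to showing that $\Si$ has infinite order whenever $\op$ is nonzero.

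For this I would compute $\Si^n$ via power-series iteration. A routine induction (or the standard trick $1/g(t) - 1/t$ is additive under iteration of $g(t) = t/(1-t)$) gives
\[
\underbrace{\tfrac{t}{1-t}\circ\cdots\circ\tfrac{t}{1-t}}_{n\text{ times}} = \frac{t}{1-nt},
\]
so $\Si^n = \Psi_{t/(1-nt)}$ by Theorem \ref{funct}. Writing $t/(1-nt) = t + nt^2 + n^2 t^3 + \cdots$ and applying $\Psi_{t/(1-nt)}$ to a two-letter word $ab$ using (\ref{phif}) gives
\[
\Si^n(ab) = ab + n(a\op b).
\]
Since $\op$ is assumed nonzero, there exist $a,b\in A$ with $a\op b \ne 0$, and then $\Si^n(ab)$ distinguishes different integers $n$; in particular $\Si^n \ne \id$ for $n \ne 0$, so $\Si$ has infinite order. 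The main (mild) obstacle is the power-series iteration $g^{\circ n}(t) = t/(1-nt)$; everything else is a direct appeal to Theorem \ref{funct}.
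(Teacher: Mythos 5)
Your proof is correct and follows essentially the same route as the paper: both reduce everything to Theorem \ref{funct}, compute $T\Si T=\Psi_{t/(1+t)}=\Si^{-1}$ and $\Si^n=\Psi_{t/(1-nt)}$, and conclude that all powers of $\Si$ are distinct when $\op\ne 0$. Your explicit evaluation $\Si^n(ab)=ab+n(a\op b)$ on a two-letter word makes precise a step the paper leaves implicit.
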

\begin{proof} From Theorem \ref{funct} we have $\Si^n=\Psi_{\frac{t}{1-nt}}$,
so all powers of $\Si$ are distinct (unless $\op=0$, in which case
$\Si=\id$).
We have also $T\Si T=\Psi_{\frac{t}{1+t}}
=\Si^{-1}$.
\end{proof}
\par\noindent
It follows immediately that $T\Si$ and $\Si T$ are involutions (cf. 
\cite[Prop. 2]{IKOO}).
For future reference we note that the equation $\Si^p=\Psi_{\frac{t}{1-pt}}$
defines $\Si^p$ for any $p\in k$:  from Theorem \ref{funct} we have
$\Si^p\Si^q=\Si^{p+q}$, and $\Si^p$ is the $p$th iterate of $\Si$ when
$p$ is an integer.
\par
From \cite{H2} we have the (inverse) functions $\exp=\Psi_{e^t-1}$ and
$\log=\Psi_{\log(1+t)}$.  As shown in \cite[Theorem 2.5]{H2},
$\exp$ is an algebra isomorphism from $(\kA,\sh)$ to $(\kA,*)$.
The functions $\exp$ and $\log$ are related to $\Si$ and $T$ as
follows.
\begin{cor}
\label{siform}
$\Si=\exp T\log T$ .
\end{cor}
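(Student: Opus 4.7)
The plan is to invoke Theorem \ref{funct} repeatedly to reduce the statement to a routine composition of formal power series. Since $\exp=\Psi_{e^t-1}$, $T=\Psi_{-t}$, and $\log=\Psi_{\log(1+t)}$, Theorem \ref{funct} gives
\[
\exp\, T\, \log\, T=\Psi_{(e^t-1)\circ(-t)\circ\log(1+t)\circ(-t)},
\]
so I only need to identify the resulting power series with $\frac{t}{1-t}$ and then cite the definition $\Si=\Psi_{\frac{t}{1-t}}$.

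The computation proceeds from the innermost composition outward. Starting with $-t$, applying $\log(1+t)$ yields $\log(1-t)$; then applying $-t$ gives $-\log(1-t)=\log\frac{1}{1-t}$; finally applying $e^t-1$ gives $\frac{1}{1-t}-1=\frac{t}{1-t}$. Thus $(e^t-1)\circ(-t)\circ\log(1+t)\circ(-t)=\frac{t}{1-t}$, and by Theorem \ref{funct} together with the definition of $\Si$, the equality $\Si=\exp T\log T$ follows.

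There is essentially no obstacle here: the only subtlety is orientation, namely checking that $\Psi_f\Psi_g=\Psi_{f\circ g}$ composes the power series in the order matching the stated right-hand side (so that the rightmost $T$ in $\exp T\log T$ corresponds to the innermost substitution $-t$). Once that bookkeeping is done, everything reduces to the elementary power-series identity above.
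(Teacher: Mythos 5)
Your proof is correct and is essentially the paper's own argument: both reduce the claim via Theorem \ref{funct} to the power-series identity $(e^t-1)\circ(-t)\circ\log(1+t)\circ(-t)=\tfrac{t}{1-t}$, the only cosmetic difference being that the paper first simplifies $\exp T=\Psi_{e^{-t}-1}$ and $\log T=\Psi_{\log(1-t)}$ before composing. The orientation bookkeeping you flag is handled correctly.
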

\begin{proof} This is immediate from Theorem \ref{funct}, 
since $\exp T=\Psi_{e^{-t}-1}$,
$\log T=\Psi_{\log(1-t)}$, and $\log(1-t)$ composed with $e^{-t}-1$ gives
\[
\frac1{1-t}-1=\frac{1-(1-t)}{1-t}=\frac{t}{1-t} .
\]
\end{proof}
\par
We now turn to the algebraic properties of $T$ and $\Si$.
\begin{prop}
\label{Thom}
$T:(\kA,*)\to (\kA,\star)$ and $T:(\kA,\star)\to (\kA,*)$
are homomorphisms.
\end{prop}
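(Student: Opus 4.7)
My plan is to prove $T(w*v)=T(w)\star T(v)$ by induction on $\ell(w)+\ell(v)$; the second statement follows formally by applying $T$ to both sides and using $T^2=\id$ (replace $w,v$ by $T(w),T(v)$).

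The key technical observation is that $T$ negates left-concatenation: since $T(aw)=-aT(w)$ on words, linearity gives $T(\alpha \cdot X)=-\alpha\cdot T(X)$ for every $\alpha\in kA$ and every linear combination $X$ of words, where $\alpha\cdot X$ denotes bilinearly extended left concatenation. Because the prefactors $a$, $b$, and $a\op b$ appearing in the recursion (\ref{ast}) all lie in $kA$, applying $T$ to each of the three terms on the right of (\ref{ast}) just produces an overall minus sign, and the induction hypothesis converts the inner $*$-products to $\star$-products.

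The base case is trivial. For the inductive step, write $w=au$ and $v=bx$ with $a,b\in A$. Applying $T$ to (\ref{ast}) and then invoking the induction hypothesis gives
\[
T(w*v)=-a(T(u)\star T(v))-b(T(w)\star T(x))-(a\op b)(T(u)\star T(x)).
\]
On the other side, $T(w)=-aT(u)$ and $T(v)=-bT(x)$, so the two signs cancel and $T(w)\star T(v)=aT(u)\star bT(x)$. Expanding by the bilinear extension of (\ref{star}) yields
\[
aT(u)\star bT(x)=a(T(u)\star bT(x))+b(aT(u)\star T(x))-(a\op b)(T(u)\star T(x)),
\]
and substituting $bT(x)=-T(v)$ and $aT(u)=-T(w)$ inside the first two summands reproduces the expression for $T(w*v)$ term by term.

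The work is really just careful sign bookkeeping, and the only step that requires a moment's care is the bilinear expansion of $aT(u)\star bT(x)$: since $T(u)$ and $T(x)$ are linear combinations of words rather than single words, one must observe that the recursion (\ref{star}), stated for $au_i \star bx_j$, combines correctly when summed against the coefficients of $T(u)$ and $T(x)$. This is an immediate consequence of the bilinearity of $\star$, but is the one place to be explicit. Once that is in hand, the sign pattern matches on the nose: the three minus signs on the left come from a single $T$ pulling one letter out of each concatenation, while on the right they arise respectively from $bT(x)=-T(v)$, from $aT(u)=-T(w)$, and from the minus sign already built into (\ref{star}).
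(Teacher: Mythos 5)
Your proof is correct and follows essentially the same route as the paper: induction on $\ell(w)+\ell(v)$, applying $T$ to the recursion (\ref{ast}), using $T(aw)=-aT(w)$ (extended linearly to prefactors in $kA$, which is needed for the $a\op b$ term) and the induction hypothesis, and matching the result against the expansion of $aT(u)\star bT(x)$ via (\ref{star}). Your explicit remarks on the bilinearity points are the only (welcome) addition; the second statement follows from $T^2=\id$ exactly as in the paper.
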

\begin{proof}
We prove the first statement; the second then follows because $T$ is
an involution.  We shall show that $T(u_1*u_2)=T(u_1)\star T(u_2)$
for any words $u_1,u_2$ by induction on $\ell(u_1)+\ell(u_2)$.  
The result is immediate if $u_1$ or $u_2$ is 1, so write $u_1=aw$ and 
$u_2=bv$ for letters $a,b$ and words $w,v$.  Then
\begin{align*}
T(u_1*u_2)&=T(a(w*bv)+b(aw*v)+(a\op b)(w*v))\\
&=-a(T(w)*T(bv))-b(T(aw)*T(v))-(a\op b)(T(w)*T(v))\\
&=a(T(w)*bT(v))+b(aT(w)*T(v))-(a\op b)(T(w)*T(v))\\
&=aT(w)\star bT(v)=T(u_1)\star T(u_2) ,
\end{align*}
where we have used the induction hypothesis and Eq. (\ref{Tind}).
\end{proof}
The following result was proved as Theorem 1 in \cite{IKOO} in a much 
less direct way.
\begin{cor}
\label{Shom}
The linear isomorphism $\Si:(\kA,\star)\to (\kA,*)$ is an algebra
isomorphism.
\end{cor}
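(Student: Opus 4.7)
The plan is to leverage the factorization $\Si = \exp\circ T\circ\log\circ T$ from Corollary \ref{siform} and track the algebra structure through each factor. Each of the four factors is either already established or immediately verified to be an algebra homomorphism between appropriate products on $\kA$, so the composition will automatically be a homomorphism; bijectivity is handed to us by Theorem \ref{funct}.

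First, by Proposition \ref{Thom}, the outermost (right-most) $T$ is an algebra homomorphism $(\kA,\star)\to(\kA,*)$. Next, by \cite[Theorem 2.5]{H2}, $\exp$ is an algebra isomorphism $(\kA,\sh)\to(\kA,*)$, so $\log$ is its two-sided inverse, an algebra isomorphism $(\kA,*)\to(\kA,\sh)$. The one ingredient not directly cited is that the middle $T$ is an algebra homomorphism $(\kA,\sh)\to(\kA,\sh)$; this is the only verification I would record explicitly, but it is immediate because $T(w)=(-1)^{\ell(w)}w$ and the shuffle product is length-preserving, so
\[
T(u\sh v)=(-1)^{\ell(u)+\ell(v)}(u\sh v)=T(u)\sh T(v).
\]
Finally, $\exp$ carries us from $(\kA,\sh)$ back to $(\kA,*)$ as an algebra isomorphism.

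Composing the four maps in sequence then shows that $\Si:(\kA,\star)\to(\kA,*)$ is an algebra homomorphism. It is bijective because $t/(1-t)\in\P$, so Theorem \ref{funct} supplies $\Si^{-1}=\Psi_{t/(1+t)}$. Hence $\Si$ is an algebra isomorphism.

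There is essentially no obstacle once the machinery is in place; the substantive work has been done in Proposition \ref{Thom}, Corollary \ref{siform}, and \cite[Theorem 2.5]{H2}. The alternative route would be a direct induction on $\ell(u_1)+\ell(u_2)$ establishing $\Si(u_1\star u_2)=\Si(u_1)*\Si(u_2)$ by using (\ref{Sind}) to unpack $\Si$ on the left and (\ref{star}) to unpack $\star$, but that is precisely the tedious calculation (essentially Theorem 1 of \cite{IKOO}) that the structural framework of this section was designed to bypass.
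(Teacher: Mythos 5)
Your proof is correct and follows essentially the same route as the paper: both factor $\Si$ as $\exp\circ T\circ\log\circ T$ via Corollary \ref{siform} and check each factor is a homomorphism between the appropriate products, citing Proposition \ref{Thom} and \cite[Theorem 2.5]{H2}. The only cosmetic difference is that you verify the middle $T$ is a $\sh$-endomorphism by the length-homogeneity of the shuffle product, whereas the paper obtains it by setting $\op=0$ in Proposition \ref{Thom}; both are immediate.
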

\begin{proof}
This follows from Corollary \ref{siform}, since $\Si$ is the composition
\[
(\kA,\star)\xrightarrow{T} (\kA,*)\xrightarrow{\log} (\kA,\sh)
\xrightarrow{T}(\kA,\sh)\xrightarrow{\exp}(\kA,*)
\]
of homomorphisms (that $T$ is an endomorphism of $(\kA,\sh)$
follows by taking $\op$ to be the zero product in Proposition \ref{Thom}).
\end{proof}
In fact, the following is a commutative diagram of algebra isomorphisms:
\begin{equation}
\label{triangle}
\begin{split}
\xymatrixcolsep{3pc}
\xymatrix{
& (\kA,*) \\
(\kA,\sh) \ar[ru]^{\exp} \\
& (\kA,\star) \ar[lu]^{T\log T} \ar[uu]^{\Si}
}
\end{split}
\end{equation}
\begin{cor}
\label{STaut}
The involutions $\Si T:(\kA,*)\to (\kA,*)$ and 
$T\Si:(\kA,\star)\to (\kA,\star)$ are algebra automorphisms.
\end{cor}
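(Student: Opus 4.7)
The plan is to recognize both maps as compositions of homomorphisms already established in the preceding results, and then invoke the involution property that was noted just after the dihedral-group corollary.

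For the first map, I would factor $\Sigma T$ as the composition
\[
(\kA,*)\xrightarrow{T}(\kA,\star)\xrightarrow{\Sigma}(\kA,*),
\]
where $T$ is an algebra homomorphism by Proposition \ref{Thom} and $\Sigma$ is an algebra homomorphism by Corollary \ref{Shom}. Hence $\Sigma T$ is an algebra homomorphism from $(\kA,*)$ to itself. Since $\Sigma T$ was already observed to be an involution (immediately after the corollary establishing $T\Sigma T=\Sigma^{-1}$), it is bijective, and therefore an algebra automorphism of $(\kA,*)$.

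The second map is handled symmetrically by factoring
\[
(\kA,\star)\xrightarrow{\Sigma}(\kA,*)\xrightarrow{T}(\kA,\star),
\]
which is again a composition of homomorphisms (Corollary \ref{Shom} and Proposition \ref{Thom}), and which is an involution by the same earlier observation; hence $T\Sigma$ is an algebra automorphism of $(\kA,\star)$.

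There is no real obstacle here: the content of the statement is that the two natural involutions of the quasi-shuffle algebras respect multiplication, and all the work has already been done. The only thing that needs to be pointed out is that although $T$ reverses which product is in play (sending $*$ to $\star$ and vice versa), composing it with $\Sigma$ (on either side) restores the original product, so the composite is genuinely an endomorphism of a single algebra, and involutivity then upgrades ``endomorphism'' to ``automorphism.''
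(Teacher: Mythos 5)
Your proof is correct and matches the paper's: the paper also deduces this immediately from Proposition \ref{Thom} and Corollary \ref{Shom}, and you have simply spelled out the composition explicitly. Nothing further is needed.
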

\begin{proof}
Immediate from Proposition \ref{Thom} and Corollary \ref{Shom}.
\end{proof}
\subsection{A one-parameter family of automorphisms}
Let $p\ne 0$ be an element of $k$, and set 
\[
H_p=\exp \Psi_{pt} \log
\]
Evidently $H_1=\id$ and $H_pH_q=H_{pq}$, so this is a one-parameter
family of isomorphisms of the vector space $\kA$.  
We can write $H_p=\Psi_{(1+t)^p-1}$, where $(1+t)^p-1$ is the power series
\[
\sum_{n\ge 1}\binom{p}{n}t^n=pt +\frac{p(p-1)}{2!}t^2+\frac{p(p-1)(p-2)}{3!}t^3
+\cdots .
\]
\par
From Corollary \ref{siform} $H_{-1}=\Si T$, so
\[
H_{-1}(w*v)=\Si T(w*v)=\Si(T(w)\star T(v))=H_{-1}(w)*H_{-1}(v)
\]
for any words $w,v$.  In fact, this property holds for all $p$.
\begin{thm}
\label{hpthm}
For all $p\ne 0$ and words $w,v$, $H_p(w*v)=H_p(w)*H_p(v)$.
\end{thm}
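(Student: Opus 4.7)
The plan is to factor $H_p$ through the shuffle algebra and exploit the fact that $\exp:(\kA,\sh)\to(\kA,*)$ is an algebra isomorphism (with inverse $\log$), as recalled in the discussion preceding Corollary \ref{siform}. Since $H_p=\exp\Psi_{pt}\log$, if we can show that $\Psi_{pt}$ is an endomorphism of the shuffle algebra $(\kA,\sh)$, then $H_p$ will be a composition
\[
(\kA,*)\xrightarrow{\log}(\kA,\sh)\xrightarrow{\Psi_{pt}}(\kA,\sh)\xrightarrow{\exp}(\kA,*)
\]
of algebra homomorphisms, proving the theorem.

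So the entire argument reduces to verifying that $\Psi_{pt}$ respects the shuffle product. The key observation is that $f=pt$ has $[t^1]f=p$ and $[t^i]f=0$ for $i\ge 2$, so in the sum
\[
\Psi_{pt}(w)=\sum_{I=(i_1,\dots,i_m)\in\CC(\ell(w))}p^m\,[i_1\!=\!1]\cdots[i_m\!=\!1]\,I[w]
\]
only the composition $I=(1,1,\dots,1)$ of length $\ell(w)$ contributes, and this composition gives $I[w]=w$. Hence
\[
\Psi_{pt}(w)=p^{\ell(w)}w
\]
for every word $w$, so $\Psi_{pt}$ acts as the scalar $p^n$ on the degree-$n$ component of $\kA$ (where degree is word length).

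Now the shuffle product $w\sh v$ is a sum of words all of length $\ell(w)+\ell(v)$, since shuffling does not create or destroy letters. Therefore
\[
\Psi_{pt}(w\sh v)=p^{\ell(w)+\ell(v)}(w\sh v)=(p^{\ell(w)}w)\sh(p^{\ell(v)}v)=\Psi_{pt}(w)\sh\Psi_{pt}(v),
\]
so $\Psi_{pt}$ is an algebra homomorphism on $(\kA,\sh)$. Combining with the first paragraph completes the proof; no step is really an obstacle, since the only subtlety (that $\Psi_{pt}$ is just a length-graded rescaling) is forced by the definition (\ref{phif}) together with the sparsity of $pt$. Indeed, exactly the same argument shows more generally that $\Psi_f$ is a homomorphism of $(\kA,\sh)$ whenever $f=ct$, recovering and slightly extending the observation that the one-parameter family $H_p$ consists of automorphisms of $(\kA,*)$.
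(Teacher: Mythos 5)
Your proof is correct and is essentially the paper's own argument: both factor $H_p=\exp\Psi_{pt}\log$ through the shuffle algebra, observe that $\Psi_{pt}(w)=p^{\ell(w)}w$ so that $\Psi_{pt}$ is a shuffle-algebra endomorphism (you simply spell out the length-preservation of $\sh$ that the paper leaves implicit), and invoke that $\exp$ and $\log$ are inverse isomorphisms between $(\kA,\sh)$ and $(\kA,*)$. No substantive difference.
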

\begin{proof}  Since
\[
\Psi_{pt}(w)=p^{\ell(w)}w
\]
for all words $w$, it follows that
\[
\Psi_{pt}(w\sh v)=\Psi_{pt}(w)\sh\Psi_{pt}(v)
\]
for all words $w,v$. 
Hence, since $\log(w*v)=\log w\sh\log v$,
\begin{multline*}
H_p(w*v)=\exp(\Psi_{pt}(\log w\sh\log v))=
\exp(\Psi_{pt}(\log w)\sh\Psi_{pt}(\log v))\\
=H_p(w)*H_p(v).
\end{multline*}
\end{proof}
\par
Thus, $H_p$ is an automorphism of the algebra $(\kA,*)$.
Note also that $TH_pT=\Psi_{1-(1-t)^p}$ is an automorphism of
the algebra $(\kA,\star)$.
\section{Hopf algebra structures}
As in \cite{H2} we define a coproduct $\De$ on $\kA$ by
\[
\De(w)=\sum_{uv=w} u\otimes v ,
\]
for words $w$, where the sum is over all pairs $(u,v)$ of words
with $uv=w$ including $(1,w)$ and $(w,1)$, and a counit $\ep:\kA\to k$
by $\ep(1)=1$ and $\ep(w)=0$ for $\ell(w)>0$.
It will also be convenient to define the reduced coproduct $\rDe$
by $\rDe(1)=0$ and $\rDe(w)=\De(w)-w\otimes 1-1\otimes w$ for 
nonempty words $w$.
\par
The coproduct can be used to define a convolution product on
the set $\Hom_k(\kA,\kA)$ of $k$-linear maps from $\kA$ to itself,
which we denote by $\odot$:  for $L_1,L_2\in\Hom_k(\kA,\kA)$ and
words $w$ of $\kA$,
\[
L_1\odot L_2(w)=\sum_{uv=w} L_1(u)L_2(v) .
\]
(The reader is warned that this is {\it not} the usual convolution
for either of the Hopf algebras $(k\<A\>,*,\De)$ or $(k\<A\>,\star,\De)$ 
defined below.)
The convolution product $\odot$ has unit element $\eta\ep$, 
where $\eta:k\to\kA$ is the unit map (i.e., it sends $1\in k$ to $1\in\kA$).
It is easy to show that any $L\in\Hom_k(\kA,\kA)$ with $L(1)=1$ has
a convolutional inverse, which we denote by $L^{\odot(-1)}$.
\par
We call $C\in\Hom_k(\kA,\kA)$ a contraction if $C(1)=0$ and $C(w)$ is
primitive (i.e., $\rDe C(w)=0$) for all words $w$, and $E\in\Hom_k(\kA,\kA)$ 
an expansion if $E(1)=1$ and $E$ is a coalgebra map.  If $C$ is a
contraction and $E$ is an expansion, we say $(E,C)$ is an inverse
pair if
\begin{equation}
\label{ctoe}
E=(\eta\ep - C)^{\odot (-1)}=\eta\ep+C+C\odot C+C\odot C\odot C+\cdots
\end{equation}
or equivalently
\begin{equation}
\label{etoc}
C=\eta\ep - E^{\odot (-1)}
\end{equation}
\begin{prop}
\label{expcon}
Suppose $C\in\Hom_k(\kA,\kA)$ is a contraction and $E$ is given by 
Eq. (\ref{ctoe}).  Then $(E,C)$ is an inverse pair.  
Conversely, if $E\in\Hom_k(\kA,\kA)$ is an expansion and $C$
is given by Eq. (\ref{etoc}), then $(E,C)$ is an inverse pair.
\end{prop}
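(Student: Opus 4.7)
The proof splits into verifying the two directions, which assert different halves of the same equivalence once one notes that (\ref{ctoe}) and (\ref{etoc}) are algebraic rearrangements of each other. In both directions, the condition at $1$ is immediate: from $E(1)=1$ one deduces $E^{\odot(-1)}(1)=1$, hence $C(1)=\eta\ep(1)-E^{\odot(-1)}(1)=0$; conversely, from $C(1)=0$ the series $\eta\ep+C+C\odot C+\cdots$ applied to $1$ gives $E(1)=1$. The substantive tasks are (i) in the forward direction, that $E$ is a coalgebra map when $C$ is a contraction, and (ii) in the converse direction, that $C(w)$ is primitive for every nonempty $w$ when $E$ is a coalgebra map.

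For (i), I would expand
\[
E(w)=\sum_{n\ge 0}\sum_{\substack{w=u_1\cdots u_n\\u_i\ne 1}}C(u_1)\cdots C(u_n),
\]
a finite sum since $\ell(u_i)\ge 1$, and prove by induction on $n$ the key identity
\[
\De(C(u_1)\cdots C(u_n))=\sum_{k=0}^{n} C(u_1)\cdots C(u_k)\otimes C(u_{k+1})\cdots C(u_n).
\]
The inductive step rests on the Leibniz-type identity $\De(XY)=\De(X)(1\otimes Y)+(X\otimes 1)\De(Y)-X\otimes Y$, which one checks for words $X,Y$ directly from the definition of deconcatenation and extends by bilinearity, together with the primitivity of each $C(u_i)$. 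Applying $\De$ term-by-term to the expansion of $E(w)$ and regrouping factorizations $w=u_1\cdots u_n$ by the cut point $x=u_1\cdots u_k$, $y=u_{k+1}\cdots u_n$ then yields $\De E(w)=\sum_{xy=w}E(x)\otimes E(y)=(E\otimes E)\De(w)$.

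For (ii), the defining relation $E\odot(\eta\ep-C)=\eta\ep$ evaluated on a nonempty $w$ provides the recursion
\[
C(w)=E(w)-\sum_{\substack{uv=w\\ u,v\ne 1}}E(u)\,C(v).
\]
I would prove $\rDe C(w)=0$ by induction on $\ell(w)$. The base case $\ell(w)=1$ reduces to $C(a)=E(a)$, which is primitive since $E$ is a coalgebra map and every letter is primitive. For the inductive step, compute $\De C(w)$ by applying $\De$ to the recursion: use $\De E(w)=(E\otimes E)\De(w)$, the Leibniz-type identity on each $\De(E(u)C(v))$, the coalgebra-map property to expand $\De E(u)=\sum_{u=u_1u_2}E(u_1)\otimes E(u_2)$, and the inductive primitivity of $C(v)$. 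After reindexing the resulting nested sums over secondary splits of $u$ or $v$ and invoking the recursion once more, the expression should collapse to $C(w)\otimes 1+1\otimes C(w)$.

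The principal obstacle is the bookkeeping in (ii): several nested sums produced by the Leibniz-type identity must be reindexed and compared, with careful attention to the $-X\otimes Y$ correction term, before the cancellations into the primitive form become visible. The forward direction (i) is comparatively mechanical once the product-deconcatenation identity is in hand.
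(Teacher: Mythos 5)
Your proposal is correct and follows essentially the same route as the paper: the forward direction expands $E(w)$ over factorizations into nonempty blocks and uses primitivity of each $C(u_i)$ to identify $\De E(w)$ with $(E\otimes E)\De(w)$, and the converse proceeds by induction on $\ell(w)$ from the recursion supplied by the convolution-inverse relation. The only cosmetic differences are that you make explicit the deconcatenation Leibniz identity $\De(XY)=\De(X)(1\otimes Y)+(X\otimes 1)\De(Y)-X\otimes Y$ (which the paper leaves implicit) and use the mirror-image recursion $C(w)=E(w)-\sum E(u)C(v)$ in place of the paper's $C(w)=E(w)-\sum C(u)E(v)$; both work since the convolutional inverse is two-sided.
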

\begin{proof}
Suppose first that $C$ is a contraction.  Evidently $E(1)=1$
from Eq. (\ref{ctoe}), so it suffices to show $E$ a coalgebra map.
Now Eq. (\ref{ctoe}) implies
\[
E(w)=\sum_{u_1\cdots u_n=w} C(u_1)\cdots C(u_n) 
\]
for words $w\ne 1$, 
where the sum is over all decompositions $w=u_1\cdots u_n$ into subwords
$u_i\ne 1$.  Hence 
\begin{multline*}
\De E(w)=E(w)\otimes 1 +1\otimes E(w)+\\
\sum_{u_1\cdots u_n=w, n\ge 2}
\sum_{i=1}^{n-1} C(u_1)\cdots C(u_i)\otimes C(u_{i+1})\cdots C(u_n) ,
\end{multline*}
which can be seen to agree with $(E\otimes E)\De(w)$.
\par
Now suppose $E$ is an expansion.  Eq. (\ref{etoc}) implies
$C(1)=0$, so it suffices to show $C(w)$ primitive for words
$w$.  We proceed by induction on $\ell(w)$.  Suppose $C$
primitive on all words of length $<n$, and let $\ell(w)=n$.
Then Eq. (\ref{etoc}) implies
\[
C(w)=E(w)-\sum_{uv=w, v\ne 1}C(u)E(v) ,
\]
and by the induction hypothesis it follows that $\De C(w)$ can be written
\begin{multline*}
\De E(w)-\sum_{uv=w, v\ne 1}(C(u)\otimes 1)\De E(v)
-\sum_{uv=w, v\ne 1}1\otimes C(u)E(v) =\\
C(w)\otimes 1+\sum_{\substack{uv=w \\ u\ne 1\ne v}}E(u)\otimes E(v)-
\sum_{\substack{uv_1v_2=w \\ v_2\ne 1}}C(u)E(v_1)\otimes E(v_2)+1\otimes C(w) .
\end{multline*}
Then
\[
\rDe C(w)=
\sum_{uv=w, u\ne 1\ne v}\left[E(u)-\sum_{u_1u_2=u} C(u_1)E(u_2)\right]\otimes E(v),
\]
and the quantity in brackets is zero by Eq. (\ref{etoc}).
\end{proof}
\par
Now let $f=c_1t+c_2t^2+\cdots\in tk[[t]]$, and let $\Psi_f$ be 
the corresponding linear map of $\kA$ as defined in \S3.  
Define the linear map $C_f:\kA\to kA$ by $C_f(1)=0$ and 
$C_f(a_1a_2\cdots a_n)=c_n a_1\op a_2\op\cdots\op a_n$ for 
$a_1, a_2,\dots a_n\in A$.
Then we have the following result.
\begin{thm}
\label{coalgid} 
For any $f\in tk[[t]]$, $(\Psi_f,C_f)$ is an inverse pair.
\end{thm}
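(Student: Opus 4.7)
The plan is to invoke Proposition \ref{expcon}: it suffices to verify that $C_f$ is a contraction and that $\Psi_f$ is obtained from $C_f$ via Eq.~(\ref{ctoe}), namely $\Psi_f = \eta\ep + C_f + C_f\odot C_f + \cdots$ (a sum which, evaluated on any fixed word, has only finitely many nonzero terms).

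First I would check that $C_f$ is a contraction. By definition $C_f(1)=0$, and for a nonempty word $w=a_1\cdots a_n$ we have $C_f(w)=c_n(a_1\op\cdots\op a_n)\in kA$. Since $\De(a)=a\otimes 1+1\otimes a$ for each letter $a\in A$, every element of $kA$ is primitive, so $\rDe C_f(w)=0$.

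The main step is then to compute $(\eta\ep+\sum_{m\ge 1}C_f^{\odot m})(w)$ for $w=a_1\cdots a_n$ and match it term-by-term with the defining sum (\ref{phif}) for $\Psi_f(w)$. By definition of $\odot$,
\[
C_f^{\odot m}(w)=\sum_{u_1\cdots u_m=w}C_f(u_1)\cdots C_f(u_m),
\]
and the only nonvanishing contributions come from decompositions in which every $u_j$ is nonempty. Such decompositions are in bijection with compositions $I=(i_1,\dots,i_m)\in\CC(n)$, via $u_j=a_{i_1+\cdots+i_{j-1}+1}\cdots a_{i_1+\cdots+i_j}$. For such a decomposition, the factor $C_f(u_j)$ pulls out the scalar $c_{i_j}$ and leaves the single element $a_{i_1+\cdots+i_{j-1}+1}\op\cdots\op a_{i_1+\cdots+i_j}\in kA$, so the concatenation product of the $C_f(u_j)$ is exactly $c_{i_1}\cdots c_{i_m}\,I[w]$ by Eq.~(\ref{Idef}).

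Summing $C_f^{\odot m}(w)$ over $m=1,\dots,n$ therefore produces $\sum_{I\in\CC(n)}c_{i_1}\cdots c_{i_m}\,I[w]=\Psi_f(w)$, which is the desired identity. The argument is really just a matching of definitions; the only subtle point is making explicit the bijection between ordered decompositions of $w$ into nonempty subwords and compositions of $\ell(w)$, which is precisely the combinatorial data indexing the sum in (\ref{phif}). Proposition \ref{expcon} then delivers the conclusion that $(\Psi_f,C_f)$ is an inverse pair.
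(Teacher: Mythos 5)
Your proof is correct and follows essentially the same route as the paper: both arguments reduce to checking that $C_f$ is a contraction and that $\Psi_f$ satisfies Eq.~(\ref{ctoe}), and then invoke Proposition \ref{expcon}. The only cosmetic difference is that you verify (\ref{ctoe}) by fully expanding $\sum_{m}C_f^{\odot m}(w)$ and matching it term-by-term against the defining sum (\ref{phif}) via the bijection between compositions of $\ell(w)$ and decompositions of $w$ into nonempty subwords, whereas the paper obtains the same identity by observing the one-step recursion $\Psi_f=C_f\odot\Psi_f+\eta\ep$ and iterating it.
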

\begin{proof}
It is evident from definitions that $\Psi_f(1)=1$ and 
\[
\Psi_f(w)=\sum_{k=1}^n C_f(a_1\cdots a_k)\Psi_f(a_{k+1}\cdots a_n) .
\]
for $w=a_1\cdots a_n$ with $a_i\in A$ and $n\ge 1$.
Stated in terms of the convolution product, this is
\[
\Psi_f=C_f \odot \Psi_f + \eta\ep ,
\]
from which Eq. (\ref{ctoe}) (with $E=\Psi_f$, $C=C_f$) follows.
Since evidently $C_f$ is a contraction, the result follows.
\end{proof}
\par
For a word $w$ of $\kA$, say $w=a_1\cdots a_n$ with the $a_i\in A$,
the ``reverse'' of $w$ is $R(w)=a_na_{n-1}\cdots a_1$.  If we set
$R(1)=1$, then $R$ extends to a linear map from $\kA$ to itself,
which is evidently an involution.  
While $R$ is not a coalgebra map for $\De$ (despite the
incorrect statement in \cite{H3}), the following is true.
\begin{prop} 
$R$ is an automorphism of both $(\kA,*)$ and $(\kA,\star)$.
\end{prop}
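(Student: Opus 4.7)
The plan is to prove $R(u_1 * u_2) = R(u_1) * R(u_2)$ for all words $u_1, u_2$ by induction on $\ell(u_1) + \ell(u_2)$, after first establishing a right-handed version of the recursion (\ref{ast}); the statement for $\star$ will then follow from the $*$ case via the isomorphism $T$.

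The key preliminary step is to show that for any words $u, v$ and letters $a, b$,
\[
ua * vb = (ua * v)b + (u * vb)a + (u * v)(a \op b),
\]
by induction on $\ell(u) + \ell(v)$. In the generic case, writing $u = cu'$ and $v = dv'$, one expands $cu'a * dv'b$ by (\ref{ast}) into three terms $c(u'a * dv'b) + d(cu'a * v'b) + (c \op d)(u'a * v'b)$, applies the inductive hypothesis to each, and then observes that the resulting nine terms match exactly the nine terms obtained by expanding the three factors $cu'a * dv'$, $cu' * dv'b$, and $cu' * dv'$ appearing on the right-hand side via (\ref{ast}). The edge cases where $u = 1$ or $v = 1$ are handled similarly, with the inductive hypothesis applied at a smaller level.

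With this right recursion in hand, the induction for $R$ on $(\kA, *)$ is clean: taking $u_1 = aw$ and $u_2 = bv$, applying (\ref{ast}) to $u_1 * u_2$ and then $R$ (using that $R(cz) = R(z)c$ for a letter $c$ and word $z$), together with the inductive hypothesis, yields
\[
R(u_1 * u_2) = (R(w) * R(v)b)a + (R(w)a * R(v))b + (R(w) * R(v))(a \op b),
\]
which is precisely the right-recursive expansion of $R(u_1) * R(u_2) = R(w)a * R(v)b$. Since $R$ is already an involution, this makes it an automorphism of $(\kA, *)$. For $(\kA, \star)$, I would use Proposition \ref{Thom}, which gives $u \star v = T(T(u) * T(v))$ (as $T^2 = \id$), together with the observation that $RT = TR$ because $T$ acts on each word $w$ as multiplication by $(-1)^{\ell(w)}$ and $R$ preserves length; combining these with the $*$ case yields $R(u \star v) = R(u) \star R(v)$. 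The main obstacle is the bookkeeping verification of the right recursion — nothing deep, but it requires careful term-by-term matching — after which everything else falls out mechanically.
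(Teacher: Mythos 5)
Your proof is correct, but it takes a genuinely different route from the paper's. The paper does not argue by induction at all: it invokes the commutative diagram (\ref{triangle}) together with the immediately following result that $R$ commutes with every $\Psi_f$ (hence with $\exp$ and $T\log T$), thereby reducing the claim to the statement that $R$ is an automorphism of the shuffle algebra $(\kA,\sh)$ — which is evident from the symmetric description of $\sh$ as a sum over $(k,l)$-shuffles. Your argument instead works directly with the defining recursion (\ref{ast}): you first establish the mirror-image recursion
\[
ua*vb=(ua*v)b+(u*vb)a+(u*v)(a\op b),
\]
by a nine-term matching induction, and then the compatibility of $R$ with $*$ falls out by a second induction; the $\star$ case follows via Proposition \ref{Thom} and $RT=TR$, just as you say. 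I checked the term-matching in the generic case and the edge cases $u=1$ or $v=1$, and it goes through (note only that $a\op b$ is in general a linear combination of letters, so right-multiplication by it must be read linearly — this causes no trouble). Your approach is more elementary and self-contained, avoiding the forward reference to the $\Psi_f$-commutation result and the machinery of \S3, at the cost of the bookkeeping lemma; the paper's approach is shorter and makes the conceptual point that everything about $R$ can be read off from the shuffle algebra via the isomorphisms $\exp$ and $T\log T$. Incidentally, your right-handed recursion is a small fact worth having on its own: it is exactly the statement that the quasi-shuffle product can equally well be defined by peeling letters off the right.
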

\begin{proof}
In view of the next result and the commutative diagram (\ref{triangle}),
it suffices to prove that $R$ is an automorphism of $(\kA,\sh)$.
But this is evident from the well-known description of the shuffle 
product as 
\[
a_1a_2\cdots a_k\sh a_{k+1}a_{k+2}\cdots a_{k+l}=\sum_{\si\in S_{k,l}}
a_{\si(1)}a_{\si(2)}\cdots a_{\si(k+l)}
\]
where $a_1,\dots, a_{k+l}$ are letters and $S_{k,l}$ is the subgroup of the 
symmetric group $S_{k+l}$ consisting of all permutations $\si$ with 
$\si(1)<\si(2)<\dots<\si(k)$
and $\si(k+1)<\si(k+2)<\dots<\si(k+l)$.
\end{proof}
\begin{prop}
$R$ commutes with $\Psi_f$ for all $f\in tk[[t]]$.
\end{prop}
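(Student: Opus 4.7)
The plan is to prove $R\Psi_f(w)=\Psi_f R(w)$ directly for an arbitrary word $w=a_1a_2\cdots a_n$ by matching terms via a natural involution on compositions. Expanding $\Psi_f(w)$ via formula (\ref{phif}), each summand $c_{i_1}\cdots c_{i_m}\,I[w]$, with $I=(i_1,\dots,i_m)\in\CC(n)$, is a concatenation product of $m$ blocks, each of which lies in $kA$. Since $R$ restricts to the identity on $A$ and is an anti-homomorphism for concatenation (obvious on words, then extended linearly), applying $R$ to $I[w]$ simply reverses the order of the $m$ blocks while fixing each block pointwise.

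The central step is the identity $R(I[w])=I^R[R(w)]$, where $I^R=(i_m,i_{m-1},\dots,i_1)$ is the reversed composition. To verify it I would unwind the definition of $I^R[R(w)]$: its $j$-th block is an $\op$-product of consecutive letters of $R(w)$ which, rewritten in terms of $w$, are exactly the letters occurring in the $(m+1-j)$-th block of $I[w]$ but in reverse order. Here the commutativity of $\op$ on $kA$ (assumed at the outset of \S2) is what guarantees that these two $\op$-products agree as elements of $kA$, so the block-by-block match goes through.

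To finish, observe that the scalar coefficient $c_{i_1}\cdots c_{i_m}$ is invariant under $I\mapsto I^R$ and that this map is an involution on $\CC(n)$. Reindexing by $J=I^R$ then gives
$$R\Psi_f(w)=\sum_{I\in\CC(n)} c_{i_1}\cdots c_{i_m}\,I^R[R(w)]=\sum_{J\in\CC(n)} c_{j_1}\cdots c_{j_m}\,J[R(w)]=\Psi_f(R(w)),$$
as desired.

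The only substantive obstacle is the block identification $R(I[w])=I^R[R(w)]$, which rests squarely on the commutativity of $\op$; everything else is bookkeeping. (Alternatively, one could use Theorem \ref{coalgid}: a short check shows $RC_f=C_fR$ and that $R$ interacts with the convolution $\odot$ by reversing factors, so induction on the number of $\odot$-factors in the expansion $\Psi_f=\eta\ep+C_f+C_f\odot C_f+\cdots$ yields the same conclusion; but the direct term-matching approach is shorter.)
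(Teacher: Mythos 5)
Your argument is correct and is essentially the paper's own proof: the paper likewise introduces the reversed composition $\bar I=(i_l,\dots,i_1)$, observes $R(I[w])=\bar I[R(w)]$, and concludes by noting that $I\mapsto\bar I$ is an involution of $\CC(n)$ fixing the coefficient $c_{i_1}\cdots c_{i_m}$. Your extra remarks spelling out why the block identity holds (commutativity of $\op$, and that $R$ reverses the order of the $kA$-valued blocks) are a fair elaboration of the paper's ``evidently.''
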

\begin{proof}
For a composition $I=(i_1,\dots,i_l)$, let $\bar I=(i_l,\dots,i_1)$.
Then evidently $R(I[w])=\bar I[R(w)]$.  Since $I\mapsto\bar I$ is
an involution of $\CC(n)$, the conclusion then follows from the 
definition (\ref{phif}) of $\Psi_f$.
\end{proof}
\begin{thm}
$(\kA,*,\De)$ and $(\kA,\star,\De)$ are Hopf algebras, with
respective antipodes $S_*=\Si TR$ and $S_{\star}=T\Si R$.
\end{thm}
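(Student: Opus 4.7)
The plan is to derive both Hopf-algebra structures, and the explicit antipodes, by transporting the well-known Hopf-algebra structure on the shuffle algebra along the isomorphisms built in Section 3, rather than by directly verifying the bialgebra and antipode axioms against the recursive definitions of $*$ and $\star$.

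First, I would recall (or check by a short induction on word length) that $(\kA, \sh, \De)$ is a connected Hopf algebra whose antipode is $S_\sh = TR$. Next, Theorem \ref{coalgid} applied to $f = e^t - 1$ shows that $\exp = \Psi_{e^t - 1}$ is an expansion, hence a coalgebra map for $\De$; combined with the result of \cite{H2} that $\exp$ is an algebra isomorphism $(\kA, \sh) \to (\kA, *)$, this promotes $\exp$ to a bialgebra isomorphism. Consequently $(\kA, *, \De)$ inherits the structure of a Hopf algebra, and uniqueness of the antipode on a connected bialgebra forces
\[
S_* = \exp \circ S_\sh \circ \exp^{-1} = \exp \circ TR \circ \log.
\]
Since $R$ commutes with every $\Psi_f$ (the preceding Proposition, applied to $\log = \Psi_{\log(1+t)}$), and since Corollary \ref{siform} can be rewritten as $\exp \circ T \circ \log = \Si T$ (multiply $\Si = \exp T \log T$ on the right by $T$ and use $T^2 = \id$), this simplifies to
\[
S_* = \exp \circ T \circ \log \circ R = \Si T R,
\]
as claimed.

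For the $\star$-version, Proposition \ref{Thom} says that $T$ is an algebra isomorphism $(\kA, \star) \to (\kA, *)$, while Theorem \ref{coalgid} applied to $f = -t$ says that $T = \Psi_{-t}$ is a coalgebra map. Hence $T$ is itself a bialgebra isomorphism, $(\kA, \star, \De)$ is a Hopf algebra, and its antipode is
\[
S_\star = T^{-1} \circ S_* \circ T = T \circ \Si T R \circ T = T \Si R,
\]
using $T^2 = \id$ and the elementary identity $TR = RT$. There is no substantive obstacle here: the real content sits in Theorem \ref{coalgid}, Corollary \ref{siform}, and Proposition \ref{Thom}, and this proof only has to arrange those pieces and keep track of the commutation of $R$ with the power-series-induced maps; in particular, no direct manipulation of the recursions (\ref{ast}) or (\ref{star}) is required.
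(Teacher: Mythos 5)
Your argument is correct, but it reaches the conclusion by a genuinely different route than the paper. The paper proves directly that $\De$ is a homomorphism for $*$ and for $\star$ (by carrying over the induction of \cite[Theorem 3.1]{H2}), gets existence of the antipodes from filtered connectedness, establishes the explicit formula for $S_*$ by a second induction on word length (carrying over \cite[Theorem 3.2]{H2}), and finally checks that $TS_*T$ satisfies the defining convolution identity (\ref{starcon}) to conclude $S_\star=T\Si R$. You instead transport the entire Hopf structure from the shuffle Hopf algebra $(\kA,\sh,\De)$ along $\exp$, using Theorem \ref{coalgid} to certify $\exp$ and $T$ as coalgebra maps, and then obtain both antipode formulas purely by conjugation, with Corollary \ref{siform} and the commutation of $R$ with the $\Psi_f$ doing the bookkeeping. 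What your route buys is the elimination of both new inductions and a conceptual explanation of why $\Si$ and $T$ appear in the antipodes at all; it also makes it immediate that (\ref{triangle}) is a diagram of Hopf algebra isomorphisms, a fact the paper records separately. What the paper's route buys is self-containedness: your proof still owes the facts that $(\kA,\sh,\De)$ is a bialgebra and that its antipode is $TR$. These are classical, but they are precisely the $\op=0$ case of the inductions the paper performs, so the inductive content is relocated rather than removed; you should either cite the shuffle Hopf algebra explicitly or note that the $\op=0$ specialization of (\ref{ast}) handles it. With that one sentence supplied, the proof is complete.
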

\begin{proof}
The inductive argument in \cite[Theorem 3.1]{H2} that $\De$ is a 
homomorphism for $*$ works equally well for $\star$, so $(\kA,*,\De)$
and $(\kA,\star,\De)$ are bialgebras.
Although these bialgebras are not necessarily graded, they are 
filtered by word length:  $\kA^n$ is the subspace generated by words 
of length at most $n$.  
Since $\kA^0=k1$, these bialgebras are filtered connected, 
and thus automatically Hopf algebras (see, e.g., \cite{M}).  
In fact, the proof of the explicit formula for $S_*$
in \cite[Theorem 3.2]{H2} (by induction on word length) carries
over to this setting, giving
\begin{align*}
S_*(w)=(-1)^n \sum_{I\in\CC(n)}I[a_n a_{n-1}\cdots a_1] 
\end{align*}
for a word $w=a_1a_2\cdots a_n$ in $\kA$, i.e., $S_*(w)=\Si TR(w)$.
The antipode $S_{\star}$ of the Hopf algebra $(\kA,\star,\De)$ is uniquely 
determined by the conditions 
\begin{equation}
\label{starcon}
S_{\star}(1)=1 \quad\text{and}\quad
\sum_{uv=w} S_{\star}(u)\star v = 0\quad\text{for words $w\ne 1$} .
\end{equation}
Now $S_*$ satisfies
\[
\sum_{uv=w} S_*(Tu)*Tv = 0
\]
for $w\ne 1$; apply $T$ both sides to get
\[
\sum_{uv=w} TS_*T(u)\star v = 0,
\]
from which we see that $S_{\star}=TS_*T$ satisfies (\ref{starcon}).
Since $T$ commutes with $R$, this means that $S_{\star}=T\Si R$.
\end{proof}
Since $S_*$ and $S_{\star}$ are antipodes of commutative Hopf algebras,
they are involutions and algebra automorphisms of $(\kA,*)$ and
$(\kA,\star)$ respectively.  
Since $R$ commutes with $\Si$ and $T$, this gives another proof of
Corollary \ref{STaut}.
Note also that $S_*S_{\star}=\Si^2$ and $S_{\star}S_*=\Si^{-2}$.
\par
For any $f\in k[[t]]$, $\Psi_f$ is a coalgebra map by Theorem \ref{coalgid}.
In particular, the maps $H_p$ of the last section are automorphisms
of the Hopf algebra $(\kA,*,\De)$, and (\ref{triangle}) is a 
commutative diagram of Hopf algebra isomorphisms.
\par
Recall that $(\kA,\op)$ is a noncommutative algebra.
We will now show that $(\kA,\op,\rDe)$ is an infinitesimal Hopf algebra 
(see \cite{A} for definitions).
\begin{thm} 
\label{iha}
$(\kA, \op,\tilde\De)$ is an infinitesimal Hopf algebra,
with antipode $S_{\op}=-\Si^{-1}$.
\end{thm}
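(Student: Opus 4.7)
The plan is to establish two claims: first, that $\rDe$ is a derivation for $\op$, giving the infinitesimal bialgebra structure; and second, that $-\Si^{-1}$ satisfies the recursion characterizing the antipode in the connected filtered setting, namely $S(1)=-1$ and $S(w)+w+\sum_{uv=w,\,u,v\ne 1}S(u)\op v=0$ for $w\ne 1$.

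For the derivation property $\rDe(u\op v)=u\cdot\rDe(v)+\rDe(u)\cdot v$, where $\kA\otimes\kA$ carries its natural $(\kA,\op)$-bimodule structure (left action on the first factor, right action on the second), the cases $u=1$ or $v=1$ are immediate. For nonempty $u=u'a$ and $v=bv'$, I would write $u\op v=u'(a\op b)v'$ and classify the splits of the resulting word(s) by cut position. The two extreme cuts give the unit terms $1\otimes(u\op v)$ and $(u\op v)\otimes 1$. Cuts strictly inside $u'$ together with the cut immediately left of the $(a\op b)$-position yield $\sum_{u=u_1u_2,\,u_1,u_2\ne 1}u_1\otimes(u_2\op v)=\rDe(u)\cdot v$, using the identity $(xa)\op(by)=x(a\op b)y$ from the extension of $\op$ to $\kA$; and the cut immediately right of $(a\op b)$ together with cuts inside $v'$ yield $\sum_{v=v_1v_2,\,v_1,v_2\ne 1}(u\op v_1)\otimes v_2=u\cdot\rDe(v)$.

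For the antipode, my approach is to use the explicit formula $\Si^{-1}(u)=\sum_{J\in\CC(\ell(u))}(-1)^{\ell(u)-\ell(J)}J[u]$ together with the combinatorial identity $J[u]\op v=J^{+}[uv]$, where, for $J=(j_1,\dots,j_m)\in\CC(\ell(u))$, the composition $J^{+}=(j_1,\dots,j_{m-1},j_m+1,1,\dots,1)\in\CC(\ell(w))$ has $\ell(v)-1$ trailing ones and is obtained by incrementing the last part of $J$ and appending ones for the remaining letters of $v$. As $(u,J)$ ranges over pairs with $uv=w$, $u,v\ne 1$, and $J\in\CC(\ell(u))$, the map $(u,J)\mapsto J^{+}$ is a bijection onto $\CC(\ell(w))\setminus\{(1,\dots,1)\}$: given $K\in\CC(\ell(w))$ with at least one part $\ge 2$, strip its trailing ones, decrement the last remaining part, and take $\ell(v)$ to be the number of trailing ones plus one. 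Since $\ell(u)-\ell(J)=\ell(w)-\ell(J^{+})-1$ the sign flips once, and the only composition missed by the bijection, $(1,\dots,1)$, contributes $(1,\dots,1)[w]=w$ to $\Si^{-1}(w)$ with sign $+1$; hence
\[
\sum_{\substack{uv=w\\u,v\ne 1}}\Si^{-1}(u)\op v=-(\Si^{-1}(w)-w)=w-\Si^{-1}(w),
\]
which rearranges to the antipode recursion for $S_\op=-\Si^{-1}$.

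The main obstacle is the bookkeeping in the antipode step, particularly verifying that $(u,J)\mapsto J^{+}$ is a bijection onto $\CC(\ell(w))\setminus\{(1,\dots,1)\}$ and checking the sign. The derivation check is a routine split analysis once the boundary cuts around the $(a\op b)$-position are properly attributed to the two sides.
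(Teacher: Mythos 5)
Your argument is correct, and while the bialgebra half coincides with the paper's (the same cut-position analysis of $u'(a\op b)v'$, with the two boundary cuts adjacent to the merged letter attributed to $\rDe(u)\cdot v$ and $u\cdot\rDe(v)$ respectively), your antipode half takes a genuinely different route. The paper proves $\sum_w \Si^{-1}(w_{(1)})\op w_{(2)}=w-\Si^{-1}(w)$ by induction on $\ell(w)$, peeling off the first letter via $\rDe(aw)=(a\otimes 1)\rDe(w)+a\otimes w$ and the recursion $\Si^{-1}(aw)=a\Si^{-1}(w)-a\op\Si^{-1}(w)$; you instead expand $\Si^{-1}$ by its closed formula and exhibit the sign-reversing bijection $(u,J)\mapsto J^{+}$ from $\{(u,J): uv=w,\ u,v\ne 1,\ J\in\CC(\ell(u))\}$ onto $\CC(\ell(w))\setminus\{(1,\dots,1)\}$. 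I checked the key identity $J[u]\op v=J^{+}[uv]$ (it is exactly the defining extension $w'c\op bv'=w'(c\op b)v'$ applied blockwise), the inverse map (strip trailing ones, decrement the last surviving part), and the sign count $\ell(u)-\ell(J)=\ell(w)-\ell(J^{+})-1$; all are right, and the excluded composition $(1,\dots,1)$ accounts for the $+w$ term. Your version buys a non-inductive, "global" explanation of why the coefficient structure of $\Si^{-1}$ is forced by the antipode equation, at the cost of heavier composition bookkeeping; the paper's induction is shorter but leans on the recursive characterization of $\Si^{-1}$. One small omission: since $(\kA,\op)$ is noncommutative, the antipode condition is two-sided, and you verify only $\sum S(w_{(1)})\op w_{(2)}+S(w)+w=0$. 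Either run the mirror bijection (increment the \emph{first} part of $J\in\CC(\ell(v))$ and prepend $\ell(u)-1$ ones, matching the identity $u\op J[v]$), or note that the circular convolution is associative with unit $0$, so a left inverse of $\id$ must agree with the two-sided inverse whose existence is guaranteed by \cite[Prop.~4.5]{A}; the paper itself disposes of the second side with a "similar" remark, so this is cosmetic rather than a gap.
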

\begin{proof}
First we show that $(\kA,\op,\rDe)$ is an infinitesimal bialgebra,
i.e., that
\begin{equation}
\label{infdi}
\rDe(w\op v)=\sum_v (w\op v_{(1)})\otimes v_{(2)} + \sum_w w_{(1)}\otimes 
(w_{(2)}\op v) ,
\end{equation}
for words $w,v$, where 
\[
\rDe(w)=\sum_w w_{(1)}\otimes w_{(2)}\quad\text{and}\quad
\rDe(v)=\sum_v v_{(1)}\otimes v_{(2)} .
\]
Eq. (\ref{infdi}) is immediate if $w$ or $v$ is 1, so we can assume
both are nonempty.  Write $w=a_1\cdots a_n$ and $v=b_1\cdots b_m$, where
the $a_i$ and $b_i$ are letters.  If $n=1$ we have $\rDe(w)=0$, so
Eq. (\ref{infdi}) becomes
\[
\rDe(a_1\op v)=\sum_v (a_1\op v_{(1)})\otimes v_{(2)} ,
\]
which is evidently true.  The case $m=1$ is similar, so we can
assume that $n,m\ge 2$.  Then
\begin{multline}
\rDe(w\op v)=\sum_{j=1}^{n-1}a_1\cdots a_j\otimes a_{j+1}\cdots 
a_{n-1}a_n\op b_1b_2\cdots b_m +\\
\sum_{i=1}^{m-1} a_1\cdots a_{n-1}a_n\op b_1b_2\cdots b_i
\otimes b_{i+1}\cdots b_m ,
\end{multline}
and the right-hand side evidently agrees with that of Eq. (\ref{infdi}).
\par
To show $(\kA,\op,\rDe)$ an infinitesimal Hopf algebra we now need to 
show that it has an antipode, i.e., a function $S_{\op}\in\Hom_k(\kA,\kA)$ with
\[
\sum_{w} S_{\op}(w_{(1)})\op w_{(2)} + S_{\op}(w) + w = 0 = 
\sum_{w} w_{(1)}\op S_{\op}(w_{(2)})+w+S_{\op}(w)
\]
for any $w\in\kA$, where $\rDe(w)=\sum_w w_{(1)}\otimes w_{(2)}$.  
This follows from \cite[Prop. 4.5]{A}, but we shall prove that 
$S_{\op}=-\Si^{-1}$ 
by showing that $-\Si^{-1}$ satisfies the defining property.   
We prove that the equation 
\begin{equation}
\label{anti}
\sum_w \Si^{-1}(w_{(1)})\op w_{(2)} = -\Si^{-1}(w) + w 
\end{equation}
holds for all words $w$ by induction on the length of $w$.  
Evidently Eq. (\ref{anti}) is true if $\ell(w)\le 1$.  Now suppose
Eq. (\ref{anti}) holds for $w\ne 1$:  we prove it for $aw$, $a\in A$.
Since $\rDe(aw)=(a\otimes 1)\rDe(w)+a\otimes w$, we must show that
\[
\sum_w \Si^{-1}(aw_{(1)})\op w_{(2)} + \Si^{-1}(a)\op w = -\Si^{-1}(aw) + aw
\]
Using Eq. (\ref{Siind}), this is 
\begin{multline*}
a \sum_w \Si^{-1}(w_{(1)})\op w_{(2)} - a \op \sum_w \Si^{-1}(w_{(1)})\op w_{(2)} 
+a\op w\\
=-a\Si^{-1}(w)+a\op\Si^{-1}(w)+aw .
\end{multline*}
The conclusion then follows by use of the induction hypothesis (\ref{anti}).
The proof that
\[
\sum_w w_{(1)}\op \Si^{-1}(w_{(2)}) = w - \Si^{-1}(w) 
\]
is similar, except that in place of Eq. (\ref{Siind}) one needs 
the identity 
\[
\Si^{-1}(wa)=\Si^{-1}(w)a-\Si^{-1}(w)\op a
\]
for words $w$ and letters $a$.
\end{proof}
The algebra $(\kA,\op)$ has the canonical derivation $D=\op\rDe$,
i.e. $D(w)=0$ for words $w$ with $\ell(w)\le 1$ and
\[
D(a_1a_2\cdots a_n)=\sum_{i=1}^{n-1} a_1\cdots a_i\op a_{i+1}\cdots a_n
\]
for letters $a_1,\dots, a_n$, $n\ge 2$.  
We note that $D^n(w)=0$ whenever $n\ge\ell(w)$, so that
\[
e^{D}=\sum_{n=0}^\infty \frac{D^n}{n!} = \id + D + \frac{D^2}{2!}+\cdots
\]
makes sense as an element of $\Hom_k(\kA,\kA)$, and similarly
for $e^{-D}$.
By \cite[Prop. 4.5]{A}, $\Si^{-1}=-S_{\op}=e^{-D}$.  In fact, 
this can be sharpened as follows.
\begin{cor}
\label{dform}
For any $r\in k$, $\Si^r=e^{rD}$.
\end{cor}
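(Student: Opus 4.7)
The plan is to bootstrap the identity $\Si^{-1}=e^{-D}$ obtained at the end of the proof of Theorem \ref{iha} to arbitrary $r\in k$ by means of a polynomial argument. The two key observations are that on each fixed word both $\Si^r$ and $e^{rD}$ are polynomial in $r$ of bounded degree, and that they agree at infinitely many values of $r$.

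First I would verify the polynomial structure. Let $w$ be a word of length $n$. The coefficients of $f=t/(1-rt)$ are $[t^i]f=r^{i-1}$, so by the definition (\ref{phif}) of $\Psi_f$,
\[
\Si^r(w)=\Psi_f(w)=\sum_{I=(i_1,\dots,i_m)\in\CC(n)} r^{n-m}\,I[w],
\]
which is polynomial in $r$ of degree at most $n-1$. On the other hand, the formula $D(a_1\cdots a_n)=\sum_{i=1}^{n-1}a_1\cdots(a_i\op a_{i+1})\cdots a_n$ shows that $D$ strictly decreases word length, so $D^k(w)=0$ for $k\ge n$ and hence $e^{rD}(w)=\sum_{k=0}^{n-1}r^k D^k(w)/k!$ is likewise polynomial in $r$ of degree at most $n-1$.

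Next I would establish coincidence at integer values. The semigroup law $\Si^p\Si^q=\Si^{p+q}$ (a consequence of Theorem \ref{funct}) gives $\Si^{-m}=(\Si^{-1})^m$ for every positive integer $m$, while $D$ commutes with itself, so $(e^{-D})^m=e^{-mD}$ in $\Hom_k(\kA,\kA)$. Iterating the identity $\Si^{-1}=e^{-D}$ from Theorem \ref{iha} therefore yields $\Si^{-m}=e^{-mD}$ for all $m\ge 1$.

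Finally, since $k\supseteq\Q$ is infinite, two polynomials in $r$ of bounded degree that agree at infinitely many points must be equal; evaluated on each word $w$, both $\Si^r(w)$ and $e^{rD}(w)$ have degree at most $\ell(w)-1$ in $r$ and agree at $r=-1,-2,-3,\dots$, so they coincide for every $r\in k$. The main thing that needs checking is the nilpotency of $D$ on each word (ensuring $e^{rD}$ is a finite sum and polynomial in $r$ of the right degree); once this is in hand, everything else follows formally from the semigroup property and the exponential case $r=-1$.
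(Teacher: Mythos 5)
Your proof is correct, but it takes a genuinely different route from the paper's. The paper computes both sides explicitly on a word $w$: it writes $\Si^r(w)=\sum_{|I|=\ell(w)}r^{|I|-\ell(I)}I[w]$, invokes Aguiar's formula $D^k/k!=\op^{(k)}\rDe^{(k)}$ to express $e^{rD}(w)$ as a sum of terms $I\langle w\rangle=I^*[w]$ over compositions of fixed length, and then matches the two sums term by term using the combinatorial identity $\ell(I^*)=|I|+1-\ell(I)$ for the involution $I\mapsto I^*$. You instead bootstrap from the single identity $\Si^{-1}=e^{-D}$ (which the paper records just before the corollary, via Theorem \ref{iha} and Aguiar's Prop.\ 4.5) using the semigroup law $\Si^p\Si^q=\Si^{p+q}$ to get agreement at all negative integers, and then conclude by polynomial interpolation, since on a word of length $n$ both $\Si^r(w)$ and $e^{rD}(w)$ are polynomials in $r$ of degree at most $n-1$ with coefficients in the vector space $\kA$, and $k\supseteq\Q$ supplies enough evaluation points. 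All the ingredients you use are established in the paper (the degree bound for $\Si^r$ from the definition of $\Psi_{t/(1-rt)}$, the nilpotency $D^n(w)=0$ for $n\ge\ell(w)$, and the semigroup property from Theorem \ref{funct}), so the argument is complete; one minor inaccuracy is that $\Si^{-1}=e^{-D}$ is stated in the discussion following Theorem \ref{iha} rather than inside its proof. The trade-off is that your argument is softer and avoids the composition combinatorics entirely, but it leans on the externally sourced base case $r=-1$, whereas the paper's direct computation is self-contained (modulo Aguiar's Prop.\ 4.4) and yields the explicit expansion of $e^{rD}(w)$ over compositions as a by-product.
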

\begin{proof}
By definition
\begin{equation}
\label{Sir}
\Si^r(w)=\Psi_{\frac{t}{1-rt}}(w)=\sum_{|I|=\ell(w)} r^{|I|-\ell(I)}I[w]
\end{equation}
for any word $w$ of $\kA$.  On the other hand, by \cite[Prop. 4.4]{A}
\[
\frac{D^k}{k!}=\op^{(k)}\tilde\De^{(k)} ,
\]
where $\op^{(k)}:\kA^{\otimes(k+1)}\to \kA$ and 
$\tilde\De^{(k)}:\kA\to\kA^{\otimes(k+1)}$
are respectively the iterated $\op$-product and coproduct maps.
Now for a word $w$ of $\kA$,
\[
\op^{(k)}\tilde\De^{(k)}(w)=\sum_{\ell(I)=k+1,|I|=\ell(w)} I\<w\>
=\sum_{\ell(I)=k+1,|I|=\ell(w)} I^*[w] 
\]
(where we recall the definition of $I\<w\>$ and $I^*$ from \S3),
and so
\begin{multline*}
e^{rD}(w)=\sum_{k\ge 0}r^k \op^k\tilde\De^{(k)}(w)
=\sum_{k\ge 0} r^k\sum_{\ell(I)=k+1, |I|=\ell(w)} I^*[w] \\
=\sum_{|I|=\ell(w)}r^{\ell(I)-1}I^*[w]=\sum_{|I|=\ell(w)}r^{\ell(I^*)-1}I[w]=
\sum_{|I|=\ell(w)}r^{|I|-\ell(I)}I[w],
\end{multline*}
which agrees with the right-hand side of Eq. (\ref{Sir}).
\end{proof}
\begin{cor}
\label{dhom}
For any $r\in k$, $\Si^r$ is an automorphism of $(\kA,\op)$.
\end{cor}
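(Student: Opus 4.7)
The plan is to leverage the identification $\Si^r = e^{rD}$ from Corollary \ref{dform} and exploit the fact that the exponential of a locally nilpotent derivation is always an algebra automorphism. The preceding paragraph already records that $D = \op\rDe$ is a derivation on $(\kA,\op)$ (this is a standard consequence of the infinitesimal bialgebra compatibility \eqref{infdi}, cited from \cite[Prop.~4.5]{A}), and that $D^n(w) = 0$ once $n \ge \ell(w)$, so the formal series $e^{rD}$ applied to any fixed word reduces to a finite sum and is genuinely well-defined in $\Hom_k(\kA,\kA)$.

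First I would establish the Leibniz formula for iterates: by a routine induction on $n$, using the derivation identity $D(w \op v) = D(w) \op v + w \op D(v)$,
\[
D^n(w \op v) = \sum_{k=0}^n \binom{n}{k} D^k(w) \op D^{n-k}(v)
\]
for all words $w,v$ and $n \ge 0$. Multiplying by $r^n/n!$ and summing (the sum being finite in each word-degree by local nilpotency) yields
\[
e^{rD}(w \op v) = \sum_{n \ge 0} \frac{r^n}{n!} \sum_{k=0}^n \binom{n}{k} D^k(w) \op D^{n-k}(v)
= e^{rD}(w) \op e^{rD}(v),
\]
after reindexing the double sum as a Cauchy product. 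Combined with Corollary \ref{dform}, this gives $\Si^r(w \op v) = \Si^r(w) \op \Si^r(v)$, i.e.\ $\Si^r$ is an algebra homomorphism for $(\kA,\op)$.

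To upgrade from homomorphism to automorphism, I would simply note that $\Si^r \Si^{-r} = \Si^0 = \id$ by Theorem \ref{funct} (or directly from $e^{rD} e^{-rD} = \id$), so $\Si^r$ is a $k$-linear bijection. This completes the proof.

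The only real work is the Leibniz iteration and the bookkeeping to rearrange $\sum_n \tfrac{r^n}{n!} \sum_k \binom{n}{k}(\cdot)$ as a product of two exponential sums; the hard conceptual point, that $D$ is a derivation, has already been absorbed into the infinitesimal Hopf algebra framework of Theorem \ref{iha}. No further obstacle arises because the local nilpotency of $D$ makes convergence a non-issue.
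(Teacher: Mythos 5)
Your proposal is correct and follows essentially the same route as the paper: the paper deduces the result from Corollary \ref{dform} by citing the standard fact that the exponential of a derivation is an automorphism (Jacobson, \emph{Lie Algebras}, sect.\ I.2), which is exactly the fact you prove inline via the Leibniz formula for $D^n(w\op v)$ and the Cauchy-product rearrangement. The only difference is that you supply the proof of that cited lemma yourself, together with the (equally standard) invertibility observation $\Si^r\Si^{-r}=\id$.
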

\begin{proof}
The exponential of a derivation is an automorphism 
\cite[sect. I.2]{J}, so this follows from the preceding result.
\end{proof}
\section{Exponentials and logarithms}
Let $\la$ be a formal parameter, $\bullet$ any of the symbols
$*$, $\star$, $\sh$, or $\op$, and let $f\in tk[[t]]$ be given by
Eq. (\ref{eff}).
Set
\[
f_{\bullet}(\la w)=\sum_{i\ge 1} \la^i c_i w^{\bullet i}\in\kA[[\la]] ,
\]
for $w\in\kA$, and use this to define a map $f_{\bullet}$
from $\la\kA[[\la]]$ to itself.
We write $\exp_{\bullet}(u)$ for $1+f_{\bullet}(u)$, where
$f=e^t-1$; and $\log_{\bullet}(1+u)$ for $f_{\bullet}(u)$, where
$f=\log(1+t)$.  Then for any $w\in\kA$, 
\[
\log_{\bullet}(\exp_{\bullet}(\la w))=\la w\quad\text{and}\quad
\exp_{\bullet}(\log_{\bullet}(1+\la w))=1+\la w ;
\]
and for $w,v\in\kA$ for $\bullet=*$ or $\bullet=\star$, and 
$w,v\in kA$ for $\bullet=\op$,
\begin{equation}
\label{expsum}
\exp_{\bullet}(\la(w+v))=\exp_{\bullet}(\la w)\bullet\exp_{\bullet}(\la v) .
\end{equation}
We extend the automorphisms $\Psi_f$ of $\kA$ to $\kA[[\la]]$
by setting $\Psi_f(\la)=\la$.
\par
The following result generalizes Lemma 3 of \cite{IKOO} .
\begin{thm}
\label{Ihafid}
For any $f=c_1t+c_2t^2+\cdots\in tk[[t]]$ and $z\in kA[[\la]]$,
\begin{equation*}
\Psi_f\left(\frac1{1-\la z}\right)=\frac1{1-f_{\op}(\la z)} .
\end{equation*}
\end{thm}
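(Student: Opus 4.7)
The plan is to expand both sides as power series in $\lambda$ in the variable $z$ and match them term by term, the only real work being to unfold $\Psi_f(z^n)$ using the defining formula (\ref{phif}).

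First I would write
\[
\frac{1}{1-\la z}=\sum_{n\ge 0}\la^n z^n,
\]
where $z^n$ is the $n$-fold concatenation power of $z\in kA[[\la]]$. Since $z$ is a $k[[\la]]$-linear combination of letters, $z^n$ is a $k[[\la]]$-linear combination of words of length exactly $n$, and $\Psi_f$ extends $k[[\la]]$-linearly. Thus applying $\Psi_f$ termwise and invoking (\ref{phif}) gives
\[
\Psi_f(z^n)=\sum_{I=(i_1,\dots,i_m)\in\CC(n)}c_{i_1}\cdots c_{i_m}\,I[z^n].
\]

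The key observation, and the step I would spell out carefully, is the product identity
\[
I[z^n]=z^{\op i_1}\,z^{\op i_2}\cdots z^{\op i_m}
\]
(concatenation of $m$ factors, each an $\op$-power of $z$). To see this, expand $z=\sum_a x_a\,a$, write $z^n=\sum x_{a_1}\cdots x_{a_n}\,a_1\cdots a_n$, and look at the definition (\ref{Idef}) of $I[a_1\cdots a_n]$: the sum over the letters $a_j$ factors into $m$ independent blocks, the $s$-th block running over $i_s$ letters combined by $\op$, which is by definition $z^{\op i_s}$. The blocks are then multiplied by concatenation, yielding the claimed identity.

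Assembling the pieces,
\begin{align*}
\Psi_f\!\left(\tfrac{1}{1-\la z}\right)
&=\sum_{n\ge 0}\la^n\sum_{I=(i_1,\dots,i_m)\in\CC(n)}c_{i_1}\cdots c_{i_m}\,z^{\op i_1}\cdots z^{\op i_m}\\
&=\sum_{m\ge 0}\sum_{i_1,\dots,i_m\ge 1}\bigl(c_{i_1}\la^{i_1}z^{\op i_1}\bigr)\cdots\bigl(c_{i_m}\la^{i_m}z^{\op i_m}\bigr)\\
&=\sum_{m\ge 0}f_{\op}(\la z)^m=\frac{1}{1-f_{\op}(\la z)},
\end{align*}
where in the last line I used the definition of $f_{\op}$ and regrouped the sum by the length $m$ of the composition, noting that the total $\la$-exponent $i_1+\cdots+i_m$ equals $n$.

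The only genuine obstacle is the bookkeeping in the product identity $I[z^n]=z^{\op i_1}\cdots z^{\op i_m}$; once that is in hand the rest is a formal manipulation of series in $\la$. No earlier results beyond the definition (\ref{phif}) of $\Psi_f$ and the definition of $f_{\op}$ are required, though the statement fits naturally in the family of identities justified by Theorem \ref{funct}.
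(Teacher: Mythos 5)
Your proof is correct, and it reaches the identity by a more elementary route than the paper. The paper first proves the more general statement that
\[
E\left(\frac1{1-\la z}\right)=\frac1{1-C(\la z+\la^2z^2+\cdots)}
\]
for an arbitrary inverse pair $(E,C)$ in the sense of Proposition \ref{expcon}: it expands $E=\eta\ep+C+C\odot C+\cdots$, observes that every term other than $1$ begins with a factor $C(\la^kz^k)$, and extracts the geometric series by a self-similarity argument; it then specializes to $(E,C)=(\Psi_f,C_f)$ using Theorem \ref{coalgid} and the observation that $f_{\op}(\la z)=C_f(\la z+\la^2z^2+\cdots)$. You instead unfold $\Psi_f(z^n)$ straight from the defining formula (\ref{phif}); your key identity $I[z^n]=z^{\op i_1}\cdots z^{\op i_m}$ is precisely the statement that $C_f^{\odot m}(\la^nz^n)$ collects the terms of $\Psi_f(\la^nz^n)$ indexed by compositions of length $m$, and your regrouping of the double sum by the length $m$ of the composition replaces the paper's self-similarity step. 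What you gain is self-containedness (nothing from \S4 is needed, only the definition of $\Psi_f$ and of $f_{\op}$); what the paper's route buys is the extra generality of arbitrary inverse pairs and the reuse of coalgebra machinery it has already built. Both arguments are complete, and the block-factorization of $I[z^n]$, which you rightly identify as the one point needing care, is handled correctly by multilinearity.
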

\begin{proof}
In fact, we shall show that
\begin{equation}
\label{ecid}
E\left(\frac1{1-\la z}\right)=\frac1{1-C(\la z+\la^2z^2+\cdots)}
\end{equation}
for any inverse pair $(E,C)$:  the conclusion then follows by
Theorem \ref{coalgid}, noting that $f_{\op}(\la z)=
C_f(\la z+\la^2z^2+\cdots)$.
We can write the left-hand side of Eq. (\ref{ecid}) as
\[
\left(\eta\ep+C+C\odot C+\cdots\right)(1+\la z+\la^2z^2+\cdots)
=1+\sum_{n\ge 1}\sum_{k\le n} C^{\odot k}(\la^nz^n) ,
\]
which we will denote by $\Box$.  Evidently each term except 1 in
$\Box$ has an initial factor of form $C(\la^k z^k)$, so
\[
\Box-1=C(\la z)\Box + C(\la^2 z^2)\Box +\cdots = 
C(\la z+\la z^2+\cdots)\Box ,
\]
and Eq. (\ref{ecid}) follows.
\end{proof}
Since $\exp:(\kA,\sh)\to (\kA,*)$ is an algebra isomorphism, we 
have $\exp f_{\sh}=f_*\exp$ for any $f\in tk[[t]]$.  
For such $f$ we also have $\Si f_{\star}=f_*\Si$ and $T f_*=f_{\star} T$.
In particular, for $z\in kA[[\la]]$, $\Si f_{\star}(\la z)=f_*(\la z)$ and
$Tf_*(\la z)=f_{\star}(-\la z)$.
For $z\in kA[[\la]]$ we also have
\begin{equation}
\label{expid}
\exp_*(\la z)=\exp(\exp_{\sh}(\la z))=\exp\left(\frac1{1-\la z}\right) ,
\end{equation}
where we have used the identity
\[
\exp_{\sh}(\la z)= 1 + \la z + \la^2z^2 + \la^3z^3 +\cdots=\frac1{1-\la z} ,
\]
which in turn follows from $z^{\sh n}=n!z^n$ for $z\in kA[[\la]]$.
We can now give a quick proof of the following result 
(cf. \cite[Prop. 4]{IKZ} and \cite[Prop. 3]{IKOO}).
\begin{cor}
\label{expthm}
For $z\in kA[[\la]]$, 
\[
\exp_*(\log_{\op}(1+\la z))=\frac1{1-\la z}
\quad\text{and}\quad
\exp_{\star}(-\log_{\op}(1+\la z))=\frac1{1+\la z} .
\]
\end{cor}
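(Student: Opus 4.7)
The plan is to derive both identities from Theorem \ref{Ihafid} (which links $\Psi_f$ applied to $\tfrac{1}{1-\la z}$ to $f_\op$ applied to $\la z$) together with the identity (\ref{expid}) for $\exp_*$; the second identity will then follow from the first by applying the involution $T$.

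I would first specialize Theorem \ref{Ihafid} to $f=\log(1+t)$. Since the coefficients of $\log(1+t)$ are $c_i=(-1)^{i+1}/i$, one recognizes $f_\op(\la z)=\log_\op(1+\la z)$, and the theorem yields
\[
\log\!\left(\frac{1}{1-\la z}\right)\;=\;\frac{1}{1-\log_\op(1+\la z)}.
\]
Next I would invoke (\ref{expid}). The small observation here is that $\log_\op(1+\la z)$ lies in $\la kA[[\la]]$: each summand $z^{\op i}$ is an element of $kA$, so the whole series is a $k[[\la]]$-linear combination of single letters with no constant term. Consequently (\ref{expid}) applies with $\la z$ replaced by $\log_\op(1+\la z)$, yielding
\[
\exp_*\bigl(\log_\op(1+\la z)\bigr)\;=\;\exp\!\left(\frac{1}{1-\log_\op(1+\la z)}\right).
\]
Combining the two displays and using $\exp\circ\log=\id$ (immediate from Theorem \ref{funct}, since $e^t-1$ and $\log(1+t)$ are functional inverses) proves the first identity.

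For the second identity I would apply $T$ to both sides of the first. By Proposition \ref{Thom}, $T:(\kA,*)\to(\kA,\star)$ is an algebra homomorphism, so $T\bigl(\exp_*(u)\bigr)=\exp_\star\bigl(T(u)\bigr)$. Since $T$ acts as multiplication by $-1$ on $kA$, and $\log_\op(1+\la z)$ is a $k[[\la]]$-linear combination of elements of $kA$, we have $T\bigl(\log_\op(1+\la z)\bigr)=-\log_\op(1+\la z)$. On the right-hand side, $T\bigl(\tfrac{1}{1-\la z}\bigr)=\sum_{n\ge 0}(-\la z)^n=\tfrac{1}{1+\la z}$, completing the proof. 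The only real subtlety is verifying that (\ref{expid}) remains valid when the argument is not literally $\la z$ but an arbitrary element of $\la kA[[\la]]$; this holds because such an element can be written as $\la z_0$ for some $z_0\in kA[[\la]]$, and the proof of (\ref{expid}) only uses that the argument is a $k[[\la]]$-linear combination of single letters.
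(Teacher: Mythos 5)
Your proposal is correct and follows essentially the same route as the paper: specialize Theorem \ref{Ihafid} to $f=\log(1+t)$, combine with Eq. (\ref{expid}) (applied to $\log_{\op}(1+\la z)\in\la kA[[\la]]$ in place of $\la z$), and obtain the second identity by applying $T$. Your extra remark justifying that (\ref{expid}) applies to an arbitrary element of $\la kA[[\la]]$ is a minor point of rigor the paper leaves implicit, but it does not change the argument.
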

\begin{proof}
In view of Eq. (\ref{expid}), the first identity is equivalent to
\begin{equation}
\label{logid}
\frac1{1-\log_{\op}(1+\la z)}=\log\left(\frac1{1-\la z}\right) ,
\end{equation}
which is just Theorem \ref{Ihafid} applied to the 
formal power series $f=\log(1+t)$.
To get the second identity, apply $T$ to both sides of the first.
\end{proof}
\begin{remark}
By applying Theorem \ref{Ihafid} to the formal power
series $e^t-1$, we get
\[
\exp\left(\frac1{1-\la z}\right)=\frac1{1-(\exp_{\op}(\la z)-1)},
\]
or $\exp_*(\la z)=(2-\exp_{\op}(\la z))^{-1}$, as in \cite[Prop. 4]{IKZ}.
\end{remark}
Here are some further corollaries of Theorem \ref{Ihafid}.
\begin{cor}
\label{Hpow}
For $p\in k$ and $z\in kA[[\la]]$, 
\[
H_p\left(\frac1{1-\la z}\right)=\left(\frac1{1-\la z}\right)^{*p} .
\]
\end{cor}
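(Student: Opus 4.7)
The plan is to reduce the claim to the identity $\frac{1}{1-\la z}=\exp_*(\log_{\op}(1+\la z))$ supplied by Corollary \ref{expthm}, and then exploit that $H_p$ acts very simply on single letters while preserving the $*$-product.

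First I would rewrite the right-hand side as a $*$-exponential. Since $\log_*$ is (by formal definition) the compositional inverse of $\exp_*$ on $\la\kA[[\la]]$, Corollary \ref{expthm} yields $\log_*\bigl(\frac{1}{1-\la z}\bigr)=\log_{\op}(1+\la z)$, and hence
\[
\left(\frac{1}{1-\la z}\right)^{*p}=\exp_*\bigl(p\log_*\tfrac{1}{1-\la z}\bigr)=\exp_*(p\log_{\op}(1+\la z)).
\]

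Next I would compute $H_p$ on a single letter $a\in A$: $\log(a)=a$ (since $[t]\log(1+t)=1$), $\Psi_{pt}(a)=pa$, and $\exp(pa)=pa$ (since $[t](e^t-1)=1$), so $H_p(a)=pa$. By linearity, $H_p$ acts as scalar multiplication by $p$ on all of $kA$. Because every term of $\log_{\op}(1+\la z)=\sum_{i\ge 1}\frac{(-1)^{i-1}}{i}(\la z)^{\op i}$ lies in $\la kA[[\la]]$, it follows that
\[
H_p(\log_{\op}(1+\la z))=p\log_{\op}(1+\la z).
\]

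Finally, $H_p$ is an automorphism of $(\kA,*)$ by Theorem \ref{hpthm}, and extends to $\kA[[\la]]$ while fixing $\la$; in particular it commutes with $\exp_*$ term by term. Applying $H_p$ to the expression $\frac{1}{1-\la z}=\exp_*(\log_{\op}(1+\la z))$ gives
\[
H_p\left(\frac{1}{1-\la z}\right)=\exp_*\bigl(H_p(\log_{\op}(1+\la z))\bigr)=\exp_*(p\log_{\op}(1+\la z))=\left(\frac{1}{1-\la z}\right)^{*p},
\]
which is the desired identity. The only point needing care is the interchange of $H_p$ with the infinite sum defining $\exp_*$, and this is automatic because the sum converges $\la$-adically and $H_p$ preserves the filtration by degree in $\la$.
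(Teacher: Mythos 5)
Your proposal is correct and follows essentially the same route as the paper: both apply $H_p$ to $\exp_*(\log_{\op}(1+\la z))=\frac1{1-\la z}$, use Theorem \ref{hpthm} to pull $H_p$ inside $\exp_*$, and observe that $H_p$ acts as multiplication by $p$ on $kA[[\la]]$, whence $\exp_*(p\log_{\op}(1+\la z))=\left(\frac1{1-\la z}\right)^{*p}$. Your explicit verification that $H_p(a)=pa$ and your identification of $\log_*\left(\frac1{1-\la z}\right)$ merely spell out details the paper leaves implicit.
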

\begin{proof}
Using Theorem \ref{hpthm} and Corollary \ref{expthm}, the left-hand side of 
the identity can be written
\[
H_p(\exp_*(\log_{\op}(1+\la z)))=\exp_*(H_p(\log_{\op}(1+\la z))) .
\]
Now $\log_{\op}(1+\la z)\in kA[[\la]]$, so the latter quantity is
\[
\exp_*(p\log_{\op}(1+\la z))=(\exp_*(\log_{\op}(1+\la z)))^{*p}=
\left(\frac1{1-\la z}\right)^{*p} .
\]
\end{proof}
\begin{cor}
\label{psifinv}
For any $p\in k$, $f=c_1t+c_2t^2+\dots\in tk[[t]]$ and $z\in kA[[\la]]$,
\[
\Psi_g\left(\frac1{1+\la z}\right)*
\left(\Psi_f\left(\frac1{1-\la z}\right)\right)^{*p}=1,
\]
where $(1+g(t))(1+f(-t))^p=1$, i.e., 
$g=((1+t)^{-p}-1)\circ f\circ(-t)$.
\end{cor}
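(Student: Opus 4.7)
The plan is to show that $\Psi_g(1/(1+\la z))$ and $(\Psi_f(1/(1-\la z)))^{*p}$ are mutually inverse $*$-powers of a common quantity, so that the product collapses to $1$. Assume $p\ne 0$ (for $p=0$ we have $g=0$ and both factors equal $1$). Write $F_p(t)=(1+t)^p-1$, so that $H_p=\Psi_{F_p}$ in the notation of \S3.

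First I would rewrite $\Psi_g(1/(1+\la z))$ in terms of $1/(1-\la z)$. Since $T$ scales each word of length $n$ by $(-1)^n$ and the coefficient of $\la^n$ in $1/(1-\la z)=\sum_{n\ge 0}\la^n z^n$ is a sum of length-$n$ words, we have $T(1/(1-\la z))=1/(1+\la z)$. Two applications of Theorem \ref{funct} then give
\[
\Psi_g\Bigl(\frac1{1+\la z}\Bigr)=\Psi_g T\Bigl(\frac1{1-\la z}\Bigr)=\Psi_{g\circ(-t)}\Bigl(\frac1{1-\la z}\Bigr),
\]
and the hypothesis $g=((1+t)^{-p}-1)\circ f\circ(-t)$ yields $g\circ(-t)=((1+t)^{-p}-1)\circ f=F_{-p}\circ f$. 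Hence
\[
\Psi_g\Bigl(\frac1{1+\la z}\Bigr)=H_{-p}\,\Psi_f\Bigl(\frac1{1-\la z}\Bigr).
\]

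Next I would convert the right-hand side into a $*$-power via Corollary \ref{Hpow}. Set $\xi:=\Psi_f(1/(1-\la z))$. By Theorem \ref{Ihafid}, $\xi=1/(1-f_\op(\la z))$, and because $z^{\op i}\in kA[[\la]]$ for each $i\ge 1$, the series $f_\op(\la z)$ lies in $\la kA[[\la]]$. Therefore $\xi=1/(1-\la z')$ with $z':=f_\op(\la z)/\la\in kA[[\la]]$, and Corollary \ref{Hpow} applied with parameter $-p$ yields $H_{-p}(\xi)=\xi^{*(-p)}$. Combining with the previous paragraph, $\Psi_g(1/(1+\la z))=\xi^{*(-p)}$, so that
\[
\Psi_g\Bigl(\frac1{1+\la z}\Bigr)*\Bigl(\Psi_f\Bigl(\frac1{1-\la z}\Bigr)\Bigr)^{*p}=\xi^{*(-p)}*\xi^{*p}=1,
\]
which is the claim.

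The only delicate step is the verification that Corollary \ref{Hpow} applies to $\xi$, which requires exhibiting $\xi$ in the form $1/(1-\la z')$ with $z'\in kA[[\la]]$; this is precisely why one divides $f_\op(\la z)$ by $\la$. The rest is a mechanical use of the composition law of Theorem \ref{funct} together with the power-series identity defining $g$.
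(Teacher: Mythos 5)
Your proof is correct and takes essentially the same route as the paper's: both use Theorem \ref{funct} to identify $\Psi_g$ applied to $1/(1+\la z)$ with $H_{-p}\Psi_f$ applied to $1/(1-\la z)$, then invoke Theorem \ref{Ihafid} to put $\xi=\Psi_f(1/(1-\la z))$ in the form $1/(1-\la u)$ with $u\in kA[[\la]]$ so that Corollary \ref{Hpow} gives $H_{-p}(\xi)=\xi^{*(-p)}$. Your explicit check that $f_{\op}(\la z)/\la$ lies in $kA[[\la]]$ and your separate treatment of $p=0$ are minor refinements of the same argument.
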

\begin{proof}
By Theorem \ref{funct} $\Psi_g=H_{-p}\Psi_f T$, so 
the conclusion can be written as $H_{-p}(\xi)*\xi^{*p}=1$ for
\[
\xi=\Psi_f\left(\frac1{1-\la z}\right) .
\]
Now Theorem \ref{Ihafid} says that $\xi=\frac1{1-\la u}$ for
\[
u=c_1z+\la c_2 z^{\op 2} +\la^2 c_3 z^{\op 3}+\cdots \in kA[[\la]] ,
\]
so we can apply Corollary \ref{Hpow} to obtain the conclusion.
\end{proof}
\begin{remark}
In particular, taking $p=1$ and $f=\frac{t}{1-st}$ in the preceding 
result gives
\begin{equation}
\label{siinv}
\Si^s\left(\frac1{1-\la z}\right)*\Si^{1-s}\left(\frac1{1+\la z}\right) =1
\end{equation}
for any $s\in k$, generalizing Corollary 1 of \cite{IKOO}.
\end{remark}
\begin{cor}
\label{frprod}
For $y,z\in kA[[\la]]$, 
\[
\frac1{1-\la y}*\frac1{1-\la z}=\frac1{1-\la y-\la z-\la^2 y\op z}
\]
and
\[
\frac1{1+\la y}\star\frac1{1+\la z}=\frac1{(1+\la y)\op(1+\la z)} .
\]
\end{cor}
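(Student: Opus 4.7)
The plan is to reduce both identities to Corollary \ref{expthm} by passing through $\exp_*$ / $\exp_\star$ of $\log_\diamond$, using the fact that $\exp_\bullet$ converts the formal sum $+$ into the product $\bullet$ (Eq. (\ref{expsum})), and conversely $\log_\diamond$ converts $\diamond$ into $+$. The key preliminary observation is that $\diamond$ extends to all of $\kA$ with $1$ as a two-sided identity, so that
\[
(1+\la y)\diamond(1+\la z)=1+\la y+\la z+\la^2 y\diamond z;
\]
moreover, since $y,z\in kA[[\la]]$ and $\diamond$ restricts to a (commutative) product on $kA$, the element $w:=y+z+\la y\diamond z$ again lies in $kA[[\la]]$, which is the setting in which Corollary \ref{expthm} is available.

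For the first identity, I would first rewrite each factor on the left-hand side using Corollary \ref{expthm}:
\[
\frac1{1-\la y}*\frac1{1-\la z}=\exp_*(\log_\diamond(1+\la y))*\exp_*(\log_\diamond(1+\la z)).
\]
Applying Eq. (\ref{expsum}) with $\bullet=*$ combines the two exponentials into $\exp_*(\log_\diamond(1+\la y)+\log_\diamond(1+\la z))$. Next I would invoke the inverse relationship $\exp_\diamond\log_\diamond=\id$ together with Eq. (\ref{expsum}) for $\bullet=\diamond$ to conclude that $\log_\diamond$ sends $\diamond$-products to sums, whence
\[
\log_\diamond(1+\la y)+\log_\diamond(1+\la z)=\log_\diamond\!\bigl((1+\la y)\diamond(1+\la z)\bigr)=\log_\diamond(1+\la w).
\]
A second application of Corollary \ref{expthm}, now with $z$ replaced by $w\in kA[[\la]]$, yields
\[
\exp_*(\log_\diamond(1+\la w))=\frac1{1-\la w}=\frac1{1-\la y-\la z-\la^2 y\diamond z},
\]
which is the desired formula.

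The second identity follows by exactly the same three moves, using instead the second half of Corollary \ref{expthm}: rewrite $\frac1{1+\la y}=\exp_\star(-\log_\diamond(1+\la y))$ and similarly for $z$, combine via Eq. (\ref{expsum}) for $\bullet=\star$, collapse the two $\log_\diamond$ terms into $\log_\diamond((1+\la y)\diamond(1+\la z))$ as above, and conclude by Corollary \ref{expthm} that
\[
\exp_\star\!\bigl(-\log_\diamond((1+\la y)\diamond(1+\la z))\bigr)=\frac1{(1+\la y)\diamond(1+\la z)}.
\]

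Conceptually the proof is entirely formal manipulation; the only step requiring any care is checking that one is allowed to collapse $\log_\diamond(1+\la y)+\log_\diamond(1+\la z)$ into a single $\log_\diamond$. This is where the restriction $y,z\in kA[[\la]]$ (as opposed to $\kA[[\la]]$) matters, since Eq. (\ref{expsum}) for $\bullet=\diamond$ requires its arguments to live in $kA$ so that the commutative product $\diamond$ governs the exponential formula. Once this point is in hand, the rest is bookkeeping.
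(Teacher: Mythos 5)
Your proof is correct and follows essentially the same route as the paper: both identities are reduced to Corollary \ref{expthm} by writing each geometric series as $\exp_*(\log_\op(\cdot))$ (resp. $\exp_\star(-\log_\op(\cdot))$), using Eq. (\ref{expsum}) to merge the exponentials, and collapsing the two $\log_\op$ terms into $\log_\op$ of the $\op$-product, exactly as in the paper's computation. The only minor difference is in the second identity, where the paper simply applies the involution $T$ to the first identity instead of redoing the parallel $\exp_\star$ computation; both routes are equally valid.
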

\begin{proof}
Using Corollary \ref{expthm}, the left-hand side of the first identity is
\begin{multline*}
\exp_*(\log_{\op}(1+\la y))*\exp_*(\log_{\op}(1+\la z))=
\exp_*(\log_{\op}(1+\la y)+\log_{\op}(1+\la z))\\
=\exp_*(\log_{\op}((1+\la y)\op (1+\la z)))=
\exp_*(\log_{\op}(1+\la y+\la z+\la^2 y\op z))\\
=\frac1{1-\la y-\la z-\la^2 y\op z},
\end{multline*}
so the identity follows.  To get the second identity, apply $T$ to 
both sides of the first.
\end{proof}
\begin{cor}
\label{repr}
For any $r\in k$ and $z\in kA[[\la]]$,
\[
\Si^r\left(\frac1{1-\la z}\right)*\frac1{1+r\la z}=\frac1{1-(1-r)\la z} .
\]
\end{cor}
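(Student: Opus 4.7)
The plan is to combine Theorem~\ref{Ihafid} (which evaluates $\Si^r=\Psi_{t/(1-rt)}$ on a $\la$-geometric series) with Corollary~\ref{frprod} (which computes the $*$-product of two such series), and then to collapse the resulting denominator using a functional equation satisfied by the auxiliary series that appears.

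First, since $t/(1-rt)=\sum_{i\ge 1}r^{i-1}t^i$, Theorem~\ref{Ihafid} applied to $f=t/(1-rt)$ gives
\[
\Si^r\!\left(\frac{1}{1-\la z}\right)=\frac{1}{1-\la y},\qquad \la y:=\sum_{i\ge 1} r^{i-1}\la^i z^{\op i}.
\]
Pulling off the first factor of $z$ in each summand yields the key identity
\[
\la y - r\la^2\,(y\op z)=\la z,
\]
which I will feed into the simplification step below.

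Next, I rewrite $\frac{1}{1+r\la z}=\frac{1}{1-\la(-rz)}$ and apply Corollary~\ref{frprod}:
\[
\Si^r\!\left(\frac{1}{1-\la z}\right)*\frac{1}{1+r\la z}
=\frac{1}{1-\la y+r\la z+r\la^2\,y\op z}.
\]
Using the functional equation above, the denominator rewrites as
\[
1-\bigl(\la y-r\la^2\,y\op z\bigr)+r\la z = 1-\la z+r\la z = 1-(1-r)\la z,
\]
which delivers the stated identity.

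I do not expect any genuine obstacle: the argument is essentially mechanical once the correct auxiliary series $y$ is named and Theorem~\ref{Ihafid} and Corollary~\ref{frprod} are lined up. The only step demanding care is verifying the functional equation for $y$, but this is simply the geometric series identity $\la y=\la z+r\la^2(y\op z)$ transcribed in the $\op$-algebra.
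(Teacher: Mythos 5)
Your proof is correct and follows the paper's argument essentially verbatim: Theorem \ref{Ihafid} with $f=t/(1-rt)$, then Corollary \ref{frprod}, then the telescoping identity $\la y - r\la^2(y\op z)=\la z$, which is exactly the paper's observation that $r\la z\op f_{\op}(\la z)=f_{\op}(\la z)-\la z$ written in your notation $\la y=f_{\op}(\la z)$.
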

\begin{proof}
By Theorem \ref{Ihafid}, 
\[
\Si^r\left(\frac1{1-\la z}\right)=\frac1{1-f_{\op}(\la z)}
\]
for $f=\frac{t}{1-rt}=t+rt^2+r^2t^3+\dots$.  Thus
\[
\Si^r\left(\frac1{1-\la z}\right)*\frac1{1+r\la z}=
\frac1{1-f_{\op}(\la z)+r\la z+\la rz\op f_{\op}(\la z)}
\]
by Corollary \ref{frprod}.  But evidently $\la rz\op f_{\op}(\la z)=
f_{\op}(\la z)-\la z$, so the denominator on the right-hand side is 
$1+r\la z-\la z$ and the conclusion follows.
\end{proof}
We also have the following result, which is proved in \cite[Prop. 4]{IKOO} 
by another method.
\begin{thm}
\label{dblfrac}
For $a,b\in A$,
\[
\Si\left(\frac1{1-\la ab}\right)=
\frac1{1-\la ab}*\Si\left(\frac1{1-\la a\op b}\right) .
\]
\end{thm}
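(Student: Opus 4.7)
I would prove the identity by comparing coefficients of $\la^n$ and inducting on $n$.  Writing $\mu := a \op b$, the claim is equivalent to
\[
\Si((ab)^n) = \sum_{k=0}^n (ab)^k * \Si(\mu^{n-k}) \qquad (n \ge 0),
\]
and the cases $n = 0, 1$ are checked directly.

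For the inductive step, the LHS is controlled by the recursion obtained from applying (\ref{Sind}) twice to $\Si((ab)^n) = \Si(a \cdot b(ab)^{n-1})$, namely
\[
\Si((ab)^n) = ab\,\Si((ab)^{n-1}) + a(b \op \Si((ab)^{n-1})) + \mu\,\Si((ab)^{n-1}) + \mu \op \Si((ab)^{n-1}),
\]
valid for $n \ge 2$.  A matching recursion for the RHS is obtained by applying the quasi-shuffle relation (\ref{ast}) to each summand $(ab)^k * \Si(\mu^{n-k})$, using $(ab)^k = a \cdot b(ab)^{k-1}$ to expose the leading letter $a$ when $k \ge 1$; each word $\mu^{\op i_1}\cdots\mu^{\op i_r}$ in the support of $\Si(\mu^{n-k})$ then contributes three terms via (\ref{ast}), organized by the leading letter of the output.

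The main obstacle is the combinatorial bookkeeping needed to verify that the three categories of RHS contributions collapse into the four LHS terms.  The essential auxiliary identity is the ``short'' recursion $\Si(\mu^{m+1}) = \mu\,\Si(\mu^m) + \mu \op \Si(\mu^m)$ (an instance of (\ref{Sind}), since $\mu$ lies in $kA$), combined with the inductive hypothesis applied to $\Si((ab)^{n-1})$; together these permit the sum over $k$ on the RHS to be regrouped so as to match the LHS recursion term by term.

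A potentially cleaner alternative proceeds by first applying Theorem~\ref{Ihafid} to rewrite $\Si\left(\frac{1}{1-\la\mu}\right) = \frac{1}{1-\sigma}$ in the concatenation algebra, where $\sigma = \sum_{k \ge 1}\la^k \mu^{\op k} \in kA[[\la]]$, and then invoking Corollary~\ref{repr} (with $r = 1$ and $z = \mu$) to identify $\frac{1}{1+\la\mu}$ as the $*$-inverse of $\Si\left(\frac{1}{1-\la\mu}\right)$.  The claim then becomes the equivalent identity $\Si\left(\frac{1}{1-\la ab}\right) * \frac{1}{1+\la\mu} = \frac{1}{1-\la ab}$, which may be attacked by the same inductive scheme with somewhat simpler algebra.
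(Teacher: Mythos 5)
Your setup is sound: the reformulation of the claim as $\Si((ab)^n)=\sum_{k=0}^n(ab)^k*\Si(\mu^{n-k})$ with $\mu=a\op b$, the base cases, and the two recursions you quote (the four-term recursion for $\Si((ab)^n)$ obtained from (\ref{Sind}) and the short recursion for $\Si(\mu^{m})$) are all correct, as is the reduction in your ``alternative'' paragraph to the equivalent identity $\Si\left(\frac1{1-\la ab}\right)*\frac1{1+\la\mu}=\frac1{1-\la ab}$ via Corollary \ref{repr}. But the argument stops exactly where the theorem begins. The assertion that the (\ref{ast})-expansion of $\sum_k(ab)^k*\Si(\mu^{n-k})$ ``can be regrouped so as to match the LHS recursion term by term'' is the entire content of the inductive step, and it is not a routine regrouping: the words in the support of $\Si(\mu^{m})$ begin with $\mu^{\op i}$ for every $i\ge1$, not just with $\mu$, so the three categories of terms produced by (\ref{ast}) have leading factors $a$, $\mu^{\op i}$, and $a\op\mu^{\op i}$, which do not align one-for-one with the four leading factors $ab$, $a(b\op\,\cdot\,)$, $\mu$, $\mu\op\,\cdot$ of the target recursion. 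Closing the induction requires at least the inverse-pair identity $\Si(\mu^m)=\sum_{i=1}^m\mu^{\op i}\,\Si(\mu^{m-i})$ together with a nontrivial resummation over $i$ and $k$ (and, for the $\mu\op\,\cdot$ term, knowledge of the leading factors of $\Si((ab)^{n-1})$ itself, fed back through the inductive hypothesis). None of this is exhibited; as written the proposal is a plan whose crucial verification is flagged as ``the main obstacle'' but never performed.

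For comparison, the paper's proof avoids this bookkeeping altogether. After the same first reduction (Eq.~(\ref{siinv}) with $s=1$), it writes $\Si=\exp T\log T$ and applies $\log$ to both sides, transporting the identity into the shuffle algebra $(\kA,\sh)$, where it becomes an explicit statement about the coefficients of the words $I[(ab)^m]$; that statement is then verified directly, the only input being the binomial theorem applied to the even parts of the composition $I$. If you wish to complete your inductive route you must actually carry out the regrouping described above; otherwise, following your own second suggestion further, the passage to the shuffle algebra is precisely what makes the remaining algebra tractable.
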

\begin{proof}
Using Eq. (\ref{siinv}) with $s=1$, the conclusion can be written as
\[
\Si\left(\frac1{1-\la ab}\right)*\frac1{1+\la a\op b}
=\frac1{1-\la ab} .
\]
Now use Corollary \ref{siform} and apply $\log$ to both sides
to make this
\begin{equation}
\label{double}
T\log\left(\frac1{1-\la ab}\right)\sh\log\left(\frac1{1+\la a\op b}\right)
=\log\left(\frac1{1-\la ab}\right) .
\end{equation}
\par
Now
\[
\log\left(\frac1{1-\la ab}\right)=1+\sum_{i\ge 1}\la^i
\sum_{\substack{I=(i_1,\dots,i_n)\\ |I|=2i}}
\frac{(-1)^n}{i_1i_2\cdots i_n}I[(ab)^i] ,
\]
and applying $T$ simply eliminates the signs.  Further, 
\[
\log\left(\frac1{1+\la a\op b}\right)
=1+\sum_{i\ge 1} \la^i
\sum_{\substack{J=(j_1,\dots,j_k)\\ |J|=i}}\frac{(-1)^k}{j_1j_2\cdots j_k}J[(a\op b)^i] ,
\]
so to prove (\ref{double}) and hence the conclusion it suffices to show 
\begin{multline}
\label{mess}
\sum_{i=0}^m\left(\sum_{\substack{I=(i_1,\dots,i_n)\\ |I|=2i}}
\frac1{i_1\cdots i_n}I[(ab)^i]\right)
\sh\left(\sum_{\substack{J=(j_1,\dots,j_k)\\ |J|=m-i}}\frac{(-1)^k}{j_1\cdots j_k}
J[(a\op b)^{m-i}]\right)\\
=\sum_{\substack{I=(i_1,\dots,i_n)\\ |I|=2m}}\frac{(-1)^n}{i_1\cdots i_n}I[(ab)^m] .
\end{multline}
To prove the latter equation, we consider an arbitrary term of the
form
\begin{equation}
\label{word}
(i_1,i_2,\dots,i_n)[(ab)^m] ,\quad i_1+\dots+i_n=2m,
\end{equation}
and note that every even $i_h=2j$ produces a factor $(a\op b)^{\op j}$.
Write $(i_1,\dots, i_n)$ as $(t_1^{p_1},\dots,t_s^{p_s})$, where
the exponents mean repetition, and let $(t_{u_1},\dots,t_{u_f})=
(2j_{u_1},\dots,2j_{u_f})$ be the
subsequence of even $t_i$'s.  Then (\ref{word}) appears on the 
right-hand side of Eq. (\ref{mess}) with coefficient
\begin{equation}
\label{rhs}
\frac{(-1)^{p_1+\dots+p_s}}{t_1^{p_1}\cdots t_s^{p_s}}=
\frac{(-1)^{p_{u_1}+\dots+p_{u_f}}}{t_1^{p_1}\cdots t_s^{p_s}},
\end{equation}
and on the left-hand side of Eq. (\ref{mess}) with 
coefficient
\[
\sum_{0\le q_{u_h}\le p_{u_h}}
\frac1{t_1^{p_1'}\cdots t_s^{p_s'}}\frac{(-1)^{q_{u_1}+\dots+q_{u_f}}}
{j_{u_1}^{q_{u_1}}\cdots j_{u_f}^{q_{u_f}}}
\binom{p_{u_1}}{q_{u_1}}\cdots \binom{p_{u_f}}{q_{u_f}} ,
\]
where 
\[
p_i'=\begin{cases}
p_i, &\text{if $t_i$ is odd;}\\
p_i-q_i, &\text{if $t_i$ is even.}
\end{cases}
\]
Since $j_{u_h}^{q_{u_h}}=2^{-q_{u_h}}t_{u_h}^{q_{u_h}}$ for $h=1,2,\dots,f$, we
can write the latter coefficient as
\[
\frac1{t_1^{p_1}\cdots t_s^{p_s}}
\sum_{0\le q_{u_h}\le p_{u_h}}
(-2)^{q_{u_1}+\dots+q_{u_f}}
\binom{p_{u_1}}{q_{u_1}}\cdots \binom{p_{u_f}}{q_{u_f}} ,
\]
which by the binomial theorem agrees with (\ref{rhs}).
\end{proof}
\section{Applications}
To demonstrate the scope of applications of quasi-shuffle products,
in this section we will outline five types of objects that are 
homomorphic images of quasi-shuffle algebras:  multiple zeta values, 
multiple $t$-values, (finite) multiple harmonic sums, 
multiple $q$-zeta values, and values of multiple polylogarithms at 
roots of unity.
In each case we show how Corollary \ref{expthm} can be applied.
In several cases we also apply Theorem \ref{dblfrac}.
\subsection{Multiple zeta values}
\label{sMZV}
Consider the case $A=\{z_1,z_2,\dots\}$ and $z_i\op z_j=z_{i+j}$
(see \cite[Ex. 1]{H2}).
Let $\H^1=k\<A\>$, and let $\H^0\subset\H^1$ be the subspace generated
by monomials that don't start in $z_1$. 
Then there is a homomorphism $\zt: (\H^0,*)\to\R$ given as in the 
introduction:
\[
\zt(z_{i_1}z_{i_2}\cdots z_{i_k})=\sum_{n_1>n_2>\dots>n_k\ge 1}
\frac1{n_1^{i_1}n_2^{i_2}\cdots n_k^{i_k}} .
\]
(The restriction that $i_1\ne 1$ is necessary for convergence
of the series.)
Also, if we define $\zt^{\star}(z_{i_1}z_{i_2}\cdots z_{i_k})$ by Eq.
(\ref{mzvstar}), then $\zt^{\star}:(\H^0,\star)\to\R$ is a homomorphism.
\par
From Corollary \ref{expthm} we have
\begin{equation}
\label{expzk}
\exp_*\left(\sum_{i\ge 1}\frac{(-1)^{i-1}}{i}\la^i z_k^{\op i}\right)=\sum_{n=0}^\infty
z_k^n\la^n
\end{equation}
and applying $\zt$ to both sides gives
\[
\exp\left(\sum_{i\ge 1}\frac{(-1)^{i-1}}{i}\la^i\zt(ik)\right)=
\sum_{n=0}^\infty\zt(z_k^n)\la^n .
\]
That is, the MZV $\zt(k,\dots,k)$ (with $n$ repetitions of $k\ge 2$)
is the coefficient of $\la^n$ in
\begin{equation}
\label{ztpwr}
Z_k(\la)=\exp\left(\sum_{i\ge 1}\frac{(-1)^{i-1}}{i}\la^i\zt(ik)\right) .
\end{equation}
This is a well-known result:  it goes back at least to \cite{BBB}
(see Eq. (11)).  
To obtain the counterpart for zeta-star values, replace $\la$ 
with $-\la$ in the second part of Corollary \ref{expthm} 
and set $z=z_k$ to get
\begin{equation}
\label{expzks}
\exp_{\star}\left(\sum_{i\ge 1}\frac{\la^i z_k^{\op i}}{i}\right)=
\sum_{n=0}^\infty z_k^n\la^n .
\end{equation}
Now apply $\zt^{\star}$ to both sides:
\[
\exp\left(\sum_{i=1}^\infty\frac{\la^i\zt(ik)}{i}\right)=
\sum_{n=0}^\infty\zt^{\star}(z_k^n)\la^n .
\]
Henceforth any string enclosed by $\{\ \}_n$ is understood to be
repeated $n$ times, so the preceding equation implies that
$\zt(\{k\}_n)$ is the coefficient of $\la^n$ in 
\[
\exp\left(\sum_{i=1}^\infty\frac{\la^i\zt(ik)}{i}\right) =
\frac1{Z_k(-\la)} ,
\]
where $Z_k$ is given by Eq. (\ref{ztpwr}).
In particular, from \cite[Cor. 2.3]{H0} (and also \cite[Eq. (36)]{BBB})
we have 
\begin{equation}
\label{rep2}
\zt(\{2\}_n)=\frac{\pi^{2n}}{(2n+1)!},
\quad\text{hence}\quad
Z_2(\la)=\frac{\sinh(\pi\sqrt{\la})}{\pi\sqrt{\la}},
\end{equation}
and thus
\begin{equation}
\label{rep2s}
\sum_{n=0}^\infty \zt^{\star}(\{2\}_n)\la^n
=\frac{\pi\sqrt{\la}}{\sin(\pi\sqrt{\la})}.
\end{equation}
It follows that 
\[
\zt^{\star}(\{2\}_n)=\frac{(-1)^{n-1}2(2^{2n-1}-1)B_{2n}\pi^{2n}}{(2n)!} ;
\]
cf. \cite[p. 203]{IKOO}.  Similarly, we have from \cite[Eq. (37)]{BBB} that
\[
\zt(\{4\}_n)=\frac{2^{2n+1}\pi^{4n}}{(4n+2)!},
\quad\text{hence}\quad
Z_4(\la)=\frac{\cosh(\sqrt2\pi\sqrt[4]\la)-\cos(\sqrt2\pi\sqrt[4]\la)}
{2\pi^2\sqrt\la},
\]
and thus
\begin{multline*}
\sum_{n=0}^\infty \zt^{\star}(\{4\}_n)\la^n=\frac1{Z_4(-\la)}=
\frac{2\pi^2i\sqrt{\la}}{\cosh(\sqrt2\pi e^{i\pif}\sqrt[4]\la)-
\cos(\sqrt2\pi e^{i\pif}\sqrt[4]\la)}\\
=\frac{\pi^2\sqrt{\la}}{\sinh(\pi\sqrt[4]\la)\sin(\pi\sqrt[4]\la)} 
=1+\frac{\pi^4\la}{90}+\frac{13\pi^8\la^2}{113400}+\frac{4009\pi^{12}\la^3}
{3405402000}+\cdots .
\end{multline*}
\par
Applying $\zt$ to Theorem \ref{dblfrac} (with $a=z_i$, $b=z_j$) gives
\begin{equation}
\label{eqij}
\sum_{n=0}^\infty \zt^{\star}(\{i,j\}_n)\la^n=\sum_{p=0}^\infty \zt(\{i,j\}_p)\la^p
\sum_{q=0}^\infty \zt^{\star}(\{i+j\}_q)\la^q
\end{equation}
for any positive integers $i,j$ with $i\ge 2$.
In particular, taking $i=2$ and $j=1$ gives
\begin{equation}
\label{3gf}
\sum_{n=0}^\infty \zt^{\star}(\{2,1\}_n)\la^n=\sum_{p=0}^\infty \zt(\{2,1\}_p)\la^p
\sum_{q=0}^\infty \zt^{\star}(\{3\}_q)\la^q=\frac{Z_3(\la)}{Z_3(-\la)}
\end{equation}
since $\zt(\{2,1\}_p)=\zt(\{3\}_p)$ by duality of multiple zeta values.
Now
\begin{multline*}
\frac{Z_3(\la)}{Z_3(-\la)}=
\exp\left(\sum_{i\ge 1}\frac{(-1)^{i-1}}{i}\la^i\zt(3i)\right)
\exp\left(\sum_{i\ge 1}\frac{\la^i\zt(3i)}{i}\right)\\
=\exp\left(\sum_{\text{$i\ge 1$ odd}}\frac{2\la^i\zt(3i)}{i}\right)
=\prod_{\text{$i\ge 1$ odd}}\sum_{j=0}^\infty\frac{2^j\la^{ij}\zt(3i)^j}{i^jj!},
\end{multline*}
so it follows from Eq. (\ref{3gf}) that
\begin{equation}
\label{3odd}
\zt^{\star}(\{2,1\}_n)
=\sum_{i_1+3i_3+5i_5+\cdots=n}\frac{2^{i_1+i_3+i_5+\cdots}\zt(3)^{i_1}\zt(9)^{i_3}
\zt(15)^{i_5}\cdots}{1^{i_1}i_1!3^{i_3}i_3!5^{i_5}i_5!\cdots} .
\end{equation}
Similarly, using the Zagier-Broadhurst identity 
\cite[Theorem 1]{BBBL}
\[
\zt(\{3,1\}_n)=\frac{2\pi^{4n}}{(4n+2)!},
\]
we obtain from Eq. (\ref{eqij}) with $i=3$, $j=1$,
\begin{multline*}
\sum_{n=0}^\infty \zt^{\star}(\{3,1\}_n)\la^n
=\sum_{p=0}^\infty\zt(\{3,1\}_p)\la^p
\sum_{q=0}^\infty\zt^{\star}(\{4\}_q)\la^q
=\frac{Z_4\left(\frac{\la}{4}\right)}{Z_4(-\la)}\\
=\frac{\cosh(\pi\sqrt[4]\la)-\cos(\pi\sqrt[4]\la)}
{\sinh(\pi\sqrt[4]\la)\sin(\pi\sqrt[4]\la)}
=1+\frac{\pi^4\la}{72}+\frac{53\pi^8\la^2}{362880}+\frac{15107\pi^{12}\la^3}
{10059033600}+\cdots
\end{multline*}
\par
S. Yamamoto \cite{Y1} defines interpolated multiple zeta values
$\zt^r(i_1,\dots,i_k)$ as, in effect, $\zt(\Si^r(z_{i_1}\cdots z_{i_k}))$.
Note that $\zt^0=\zt$ and $\zt^1=\zt^{\star}$.
By applying $\zt$ to both sides of Corollary \ref{repr} with 
$z=z_k$, $k\ge 2$, we obtain
\begin{equation}
\label{intrep}
\sum_{n=0}^\infty \zt^r(\{k\}_n)\la^n=\frac{Z_k((1-r)\la)}{Z_k(-r\la)} .
\end{equation}
In particular, taking $r=\frac12$ in Eq. (\ref{intrep}) gives
\[
\sum_{n=0}^\infty \zt^{\frac12}(\{k\}_n)\la^n
=\frac{Z_k(\frac{\la}2)}{Z_k(-\frac{\la}2)}
=\exp\left(\sum_{\text{$i\ge 1$ odd}}\frac{\la^i\zt(ik)}{i2^{i-1}}\right)
=\prod_{\text{$i\ge 1$ odd}}\sum_{j=0}^\infty\frac{\la^{ij}\zt(ik)^j}{i^j2^{j(i-1)}j!} ,
\]
from which follows 
\begin{equation}
\label{khalf}
\zt^{\frac12}(\{k\}_n)=\sum_{i_1+3i_3+\cdots=n}
\frac{2^{i_1+i_3+i_5\cdots-n}\zt(k)^{i_1}\zt(3k)^{i_3}\zt(5k)^{i_5}\cdots}
{i_1!1^{i_1}i_3!3^{i_3}i_5!5^{i_5}\cdots} .
\end{equation}
Comparing the case $k=3$ of Eq. (\ref{khalf}) with Eq. (\ref{3odd}) above 
gives
\[
\zt^{\frac12}(\{3\}_n)=\frac1{2^n}\zt^{\star}(\{2,1\}_n),
\]
an instance of the two-one formula discussed in \cite{Y1} and recently 
proved in \cite{Zh3}.
\par
The sum theorem for multiple zeta values \cite{G} states that 
$\zt(S(k,l))=\zt(k)$ for $1\le l\le k-1$, where $S(k,l)$ is the sum of 
all monomials in $\H^0$ of degree $k$ and length $l$.  
Now it is a straightforward combinatorial exercise to show
that $DS(k,l)=(k-l)S(k,l-1)$, where $D$ is the canonical derivation of
\S4.  Then using Corollary \ref{dform},
\begin{multline*}
\zt^r(S(k,l))=\zt(\Si^rS(k,l))=\zt\left(\sum_{n=0}^{l-1}\frac{r^nD^n}{n!}S(k,l)
\right)=\\
\zt\left(\sum_{n=0}^{l-1}r^n\binom{k-l-1+n}{n}S(k,l-n)\right)
=\sum_{n=0}^{l-1}r^n\binom{k-l-1+n}{n}\zt(k),
\end{multline*}
giving a new proof of \cite[Theorem 1.1]{Y1}.
\par
In \cite{H4} it is proved that if 
\begin{equation}
\label{little}
e(2n,k)=\sum_{(i_1,\dots,i_k)\in\CC(n)}z_{2i_1}\cdots z_{2i_k}
\end{equation}
where the sum is over all compositions $(i_1,\dots,i_k)$ of $n$ having
$k$ parts, then the generating functions
\[
F(t,s)=1+\sum_{n\ge k\ge 1}\zt(e(2n,k))t^ns^k,\quad
F^{\star}(t,s)=1+\sum_{n\ge k\ge 1}\zt^{\star}(e(2n,k))t^ns^k
\]
have the closed forms
\[
F(t,s)=\frac{\sin(\pi\sqrt{(1-s)t})}{\sqrt{1-s}\sin(\pi\sqrt{t})},\quad
F^{\star}(t,s)=\frac{\sqrt{1+s}\sin(\pi\sqrt{t})}
{\sin(\pi\sqrt{(1+s)t})}.
\]
If we define
\[
F(t,s;r)=1+\sum_{n\ge k\ge 1}\zt^r(e(2n,k))t^ns^k,
\]
then this result can be generalized to 
\begin{equation}
\label{intev}
F(t,s;r)=\frac{\sin(\pi\sqrt{(1-s+rs)t})\sqrt{1+rs}}
{\sin(\pi\sqrt{(1+rs)t})\sqrt{1-s+rs}}=
\frac{F(t,(1-r)s)}{F(t,-rs)} .
\end{equation}
\par
To prove Eq. (\ref{intev}), we first note that $(k\<A\>,*)$ is isomorphic
to the algebra $\QS$ of quasi-symmetric functions.
Further, $e(2n,k)$ is the image under the degree-doubling map 
$\D:\QS\to\QS$ that sends $z_{i_1}\cdots z_{i_k}$
to $z_{2i_1}\cdots z_{2i_k}$ of the symmetric function
\[
N_{n,k}=\sum_{\substack{\text{partitions $\la$ of $n$}\\ \text{with $k$ parts}}}m_{\la} ,
\]
and, as shown in \cite{H4},
\[
1+\sum_{n\ge k\ge 1}N_{n,k}t^ns^k=E((s-1)t)H(t),
\]
where $E(t)$, $H(t)$ are respectively the generating functions of the
elementary and complete symmetric functions.  From Corollary \ref{repr} 
we have, for any rational $p$,
\[
\Si^p\left(\frac1{1-tz_1}\right)=\frac1{1-(1-p)tz_1}*\left(\frac1{1+ptz_1}
\right)^{-*},
\]
which translated into the language of symmetric functions is
\[
\Si^pE(t)=E((1-p)t)E(-pt)^{-1}=E((1-p)t)H(pt) .
\]
Then
\begin{multline*}
\Si^r\left(1+\sum_{n\ge k\ge 1}N_{n,k}t^ns^k\right)=
\Si^rE((s-1)t)H(t)=\Si^r\Si^{\frac1{s}}E(st)=\\
\Si^{r+\frac1{s}}E(st)=E((s-rs-1)t)H((1+rs)t) .
\end{multline*}
From Eqs. (\ref{rep2}) and (\ref{rep2s}) above we have
\[
\zt\D E(t)=\frac{\sinh(\pi\sqrt t)}{\pi\sqrt t}\quad\text{and}\quad
\zt\D H(t)=\frac{\pi\sqrt t}{\sin(\pi\sqrt t)}
\]
and so, since $\D$ commutes with $\Si^r$,
\begin{multline*}
F(t,s;r)=\zt\Si^r\D\left(1+\sum_{n\ge k\ge 1}N_{n,k}t^ns^k\right)=\\
\zt(\D(E((s-rs-1)t)H((1+rs)t)))
=\frac{\sinh(\pi\sqrt{(s-rs-1)t})\sqrt{(1+rs)t}}{\sqrt{(s-rs-1)t}\sin(\pi
\sqrt{(1+rs)t})},
\end{multline*}
from which Eq. (\ref{intev}) follows.
\subsection{Multiple $t$-values}
As in \cite{H5}, let $t(i_1,\dots,i_k)$ be the sum of those
terms in the series (\ref{mzv}) for $\zt(i_1,\dots,i_k)$ with 
odd denominators.
Then one has a homomorphism $t:(\H^0,*)\to\R$ defined by
$t(z_{i_1}\cdots z_{i_k})=t(i_1,\dots,i_k)$ for $i_1>1$.
We can also define multiple $t$-star values by
\[
t^{\star}(i_1,\dots,i_k)=\sum_{n_1\ge n_2\ge\cdots\ge n_k\ge 1,\ \text{$n_i$ odd}}
\frac1{n_1^{i_1}n_2^{i_2}\cdots n_k^{i_k}}
\]
for $i_1>1$, so that there is a homomorphism $t^{\star}:(\H^0,\star)\to\R$
given by $t^{\star}(z_{i_1}\cdots z_{i_k})=t^{\star}(i_1,\dots,i_k)$.
Applying $t$ to both sides of Eq. (\ref{expzk}) gives
\begin{equation}
\label{trep}
\exp\left(\sum_{i\ge 1}\frac{(-1)^{i-1}}{i}\la^it(ik)\right)=
\sum_{n=0}^\infty t(z_k^n)\la^n .
\end{equation}
Now 
\[
t(ik)=\sum_{\text{$n\ge 1$  odd}}\frac1{n^{ik}}=\left(1-\frac1{2^{ik}}\right)\zt(ik)
\]
so that the left-hand side of Eq. (\ref{trep}) is
\begin{multline*}
\exp\left(\sum_{i\ge 1}\frac{(-1)^{i-1}}{i}\left(1-\frac1{2^{ik}}\right)\la^i\zt(ik)
\right)=\\
\exp\left(\sum_{i\ge1}\frac{(-1)^{i-1}}{i}\la^i\zt(ik)\right)
\exp\left(-\sum_{i\ge1}\frac{(-1)^{i-1}}{i}\left(\frac{\la}{2^k}\right)^i
\zt(ik)\right)
\end{multline*}
and thus
\[
t(\{k\}_n)=\text{coefficient of $\la^n$ in}\
\frac{Z_k(\la)}{Z_k(\la/2^k)},
\]
where $Z(\la)$ is given by Eq. (\ref{ztpwr}); cf. \cite[Theorem 8]{H5}.  
Similarly
\[
t^{\star}(\{k\}_n)=\text{coefficient of $\la^n$ in}\
\frac{Z_k(-\la/2^k)}{Z_k(-\la)} .
\]
In particular, from the preceding subsection we have
\begin{equation}
\label{t2}
\sum_{n=0}^\infty t(\{2\}_n)\la^n=\cosh\left(\pit\sqrt{\la}\right),\quad
\sum_{n=0}^\infty t^{\star}(\{2\}_n)\la^n=\sec\left(\pit\sqrt{\la}\right),
\end{equation}
and
\begin{align*}
\sum_{n=0}^\infty t(\{4\}_n)\la^n&=\frac12\left[\cosh\left(\frac{\pi\sqrt[4]\la}
{\sqrt2}\right)+\cos\left(\frac{\pi\sqrt[4]\la}{\sqrt2}\right)\right],\\
\sum_{n=0}^\infty t^{\star}(\{4\}_n)\la^n&=\sec\left(\pit\sqrt[4]\la\right)
\sech\left(\pit\sqrt[4]\la\right) .
\end{align*}
\par
As with multiple zeta values, we can define interpolated multiple
$t$-values
\[
t^r(i_1,\dots,i_k)=t(\Si^r(z_{i_1}\cdots z_{i_k})) .
\]
Reasoning as before, we obtain from Corollary \ref{repr} an
analogue of Eq. (\ref{intrep}):
\[
\sum_{n=0}^\infty t^r(\{k\}_n)\la^n=\frac{Z_k((1-r)\la)Z_k(-r\la/2^k)}
{Z_k((1-r)\la/2^k)Z_k(-r\la)} .
\]
In the case $r=\frac12$ this gives a formula like Eq. (\ref{khalf}):
\[
t^{\frac12}(\{k\}_n)=\sum_{i_1+3i_3+\cdots=n}
\frac{2^{i_1+i_3+i_5\cdots-n}t(k)^{i_1}t(3k)^{i_3}t(5k)^{i_5}\cdots}
{i_1!1^{i_1}i_3!3^{i_3}i_5!5^{i_5}\cdots} .
\]
Also, we can generalize the result of \cite{Zh2} that
\[
1+\sum_{n\ge k\ge 1}t(e(2n,k))\tau^n\si^k
=\frac{\cos\left(\pit\sqrt{(1-\si)\tau}\right)}
{\cos\left(\pit\sqrt \tau\right)}
\]
(where $e(2n,k)$ is given by Eq. (\ref{little}) above) to
\[
1+\sum_{n\ge k\ge 1}t^r(e(2n,k))\tau^n\si^k
=\frac{\cos\left(\pit\sqrt{(1-\si+r\si)\tau}\right)}
{\cos\left(\pit\sqrt{(1+r\si)\tau}\right)}
\]
using the reasoning of the last subsection and Eqs. (\ref{t2}).
\subsection{Multiple harmonic sums}
If one defines, for fixed $n$, the finite sums
\[
A_{(k_1,\dots,k_l)}(n)=\sum_{n\ge m_1>m_2>\dots>m_l\ge 1}
\frac1{m_1^{k_1}m_2^{k_2}\cdots m_l^{k_l}} 
\]
and 
\[
S_{(k_1,\dots,k_l)}(n)=\sum_{n\ge m_1\ge m_2\ge\dots\ge m_l\ge 1}
\frac1{m_1^{k_1}m_2^{k_2}\cdots m_l^{k_l}} ,
\]
then there are homomorphisms $\zt_{\le n}:(\H^1,*)\to\R$ and
$\zt_{\le n}^{\star}:(\H^1,\star)\to\R$ given by
\[
\zt_{\le n}(z_{k_1}\cdots z_{k_l})=A_{(k_1,\dots,k_l)}(n)
\]
and 
\[
\zt_{\le n}^{\star}(z_{k_1}\cdots z_{k_l})=S_{(k_1,\dots,k_l)}(n) .
\]
\par
Applying these homomorphisms to the Eqs. (\ref{expzk}) and (\ref{expzks}) 
above, we obtain
\[
\zt_{\le n}(\{k\}_{r})=\text{coefficient of $\la^r$ in}\
\exp\left(\sum_{i\ge 1}\frac{(-1)^{i-1}}{i}\la^i A_{ik}(n)\right) .
\]
and
\begin{equation}
\label{finsrep}
\zt_{\le n}^{\star}(\{k\}_{r})=\text{coefficient of $\la^r$ in}\
\exp\left(\sum_{i\ge 1}\frac{\la^i A_{ik}(n)}{i}\right) .
\end{equation}
Note that $k$ can be 1 in these formulas since the sums
involved are finite.
In particular, it is well-known that
\[
\zt_{\le n}(\{1\}_r)=\frac1{n!}\stone{n+1}{r+1} ,
\]
where $\stone{n}{k}$ is the number of permutations of $\{1,2,\dots,n\}$
with $k$ disjoint cycles (unsigned Stirling number of the first kind).
Eq. (\ref{finsrep}) can be compared to the explicit formula given 
by \cite[Eq. (21)]{H3}.
\subsection{Multiple $q$-zeta values}
As in the preceding examples let $A=\{z_1,z_2,\dots\}$, but now
define the product $\op$ by
\begin{equation}
\label{qop}
z_i\op z_j=z_{i+j}+ (1-q)z_{i+j-1} .
\end{equation}
Here we take as our ground field $k=\Q(1-q)$.
Let $\H_q^1=k\<A\>$, $\H_q^0$ the subspace generated by words
not starting with $z_1$.
Then we have homomorphisms $\zt_q:(\H_q^0,*)\to\Q[[q]]$ and
$\zt_q^{\star}:(\H_q^0,\star)\to\Q[[q]]$ given by
\[
\zt_q(z_{k_1}z_{k_2}\cdots z_{k_l})=\sum_{m_1>m_2>\dots>m_l\ge 1}
\frac{q^{m_1(k_1-1)+m_2(k_2-1)+\dots+m_l(k_l-1)}}
{[m_1]^{k_1}[m_2]^{k_2}\cdots [m_l]^{k_l}} ,
\]
and
\[
\zt_q^{\star}(z_{k_1}z_{k_2}\cdots z_{k_l})=\sum_{m_1\ge m_2\ge \dots>m_l\ge 1}
\frac{q^{m_1(k_1-1)+m_2(k_2-1)+\dots+m_l(k_l-1)}}
{[m_1]^{k_1}[m_2]^{k_2}\cdots [m_l]^{k_l}} ,
\]
where $[m]=(1-q^m)/(1-q)$.
\par
Formulas like those obtained in the last three examples are complicated
by presence of the extra term in Eq. (\ref{qop}).  Iteration of
(\ref{qop}) gives
\[
z_k^{\op i}=\sum_{j=0}^{i-1}\binom{i-1}{j}(1-q)^jz_{ik-j} .
\]
Then, as in \cite[Ex. 4]{IKOO}, we can apply $\zt_q$ and $\zt_q^{\star}$
to the equations 
\[
\exp_*\left(\sum_{i\ge 1}\frac{(-1)^{i-1}}{i}\la^iz_k^{\op i}\right)=\sum_{i=0}^\infty
z_k^i\la^i
\quad\text{and}\quad
\exp_{\star}\left(\sum_{i\ge 1}\frac{\la^i}{i}z_k^{\op i}\right)=\sum_{i=0}^\infty
z_k^i\la^i
\]
to get, for $k\ge 2$,
\begin{multline*}
\zt_q(\{k\}_r)=\\
\text{coefficient of $\la^r$ in}\
\exp\left[\sum_{i\ge 1}\frac{(-1)^{i-1}\la^i}{i}\left(\sum_{j=0}^{i-1}
\binom{i-1}{j}(1-q)^j\zt_q(ik-j)\right)\right]
\end{multline*}
and
\begin{multline*}
\zt_q^{\star}(\{k\}_r)=\\
\text{coefficient of $\la^r$ in}\
\exp\left[\sum_{i\ge 1}\frac{\la^i}{i}\left(\sum_{j=0}^{i-1}
\binom{i-1}{j}(1-q)^j\zt_q(ik-j)\right)\right] .
\end{multline*}
\subsection{Multiple polylogarithms at roots of unity}
Fix $r\ge 2$, and let $\om=e^{\frac{2\pi i}{r}}$.  Then for
an integer composition $I=(i_1,\dots,i_k)$, the values of the 
multiple polylogarithm $\L_I$ at $r$th roots of unity are given by
\[
\L_I(\om^{j_1},\dots,\om^{j_k})=
\sum_{n_1>\cdots >n_k\ge 1}\frac{\om^{n_1j_1}\cdots \om^{n_kj_k}}{n_1^{i_1}\cdots n_k^{i_k}},
\]
and the series converges provided $\om^{j_1}i_1\ne 1$.
We can define the multiple ``star-polylogarithms'' by
\[
\L_I^{\star}(\om^{j_1},\dots,\om^{j_k})=\sum_{n_1\ge\cdots\ge n_k\ge 1}
\frac{\om^{n_1j_1}\cdots \om^{n_kj_k}}{n_1^{i_1}\cdots n_k^{i_k}} .
\]
Here we let $A=\{z_{i,j} : i \ge 1, 0\le j\le r-1\}$ and
$z_{i,j}\op z_{p,q}=z_{i+p,j+q}$, where the second subscript is
understood mod $r$.  The algebra $(\kA,*)$ is called the
Euler algebra in \cite{H2} (see Ex. 2).
Let $\E_r=\kA$, $\E_r^0$ the subalgebra of $\kA$ generated by words
not starting in $z_{1,0}$. 
Then there are homomorphisms $Z:(\E_r^0,*)\to\C$ and
$Z^{\star}:(\E_r^0,\star)\to\C$ given by
\begin{align*}
Z(z_{i_1,j_1}\cdots z_{i_k,j_k})&=\L_{(i_1,\dots,i_k)}(\om^{j_1},\dots,\om^{j_k})\\
Z^{\star}(z_{i_1,j_1}\cdots z_{i_k,j_k})&=\L_{(i_1,\dots,i_k)}^{\star}
(\om^{j_1},\dots,\om^{j_k}) .
\end{align*}
\par
From Corollary \ref{expthm} we have
\[
\exp_*\left(\sum_{i\ge 1} \frac{(-1)^{i-1}\la^i}{i} z_{s,t}^{\op i}\right)=
\sum_{i=0}^\infty z_{s,t}^i \la^i .
\]
Now $z_{s,t}^{\op i}=z_{si,ti}$, and if $t$ is relatively prime to $r$ the 
preceding equation is
\[
\exp_*\left(\sum_{j=0}^{r-1}\sum_{\substack{i\ge 1\\ ti\equiv j\mod r}} 
\frac{(-1)^{i-1}\la^i}{i} z_{is,j}\right)
=\sum_{i=0}^\infty z_{s,t}^i \la^i ,
\]
with fewer terms on the left-hand side if $t$ has factors in common
with $r$.  Applying $Z$ to both sides, we have
\begin{multline}
\label{polyl}
\L_{\underbrace{(s,\dots,s)}_k}(\underbrace{\om^t,\dots,\om^t}_k)=\\
\text{coefficient of $\la^k$ in}\
\exp\left(\sum_{j=0}^{r-1}\sum_{\substack{i\ge 1\\ ti\equiv j\mod r}} 
\frac{(-1)^{i-1}\la^i}{i} \L_{is}(\om^j)\right) .
\end{multline}
The counterpart for multiple star-polylogarithms is
\begin{multline}
\label{polyls}
\L_{\underbrace{(s,\dots,s)}_k}^{\star}(\underbrace{\om^t,\dots,\om^t}_k)=\\
\text{coefficient of $\la^k$ in}\
\exp\left(\sum_{j=0}^{r-1}\sum_{\substack{i\ge 1\\ ti\equiv j\mod r}} 
\frac{\la^i}{i} \L_{is}(\om^j)\right) .
\end{multline}
\par
The simplest case is $r=2$.  Here $\om=-1$ and images under $Z$ and 
$Z^{\star}$ are the alternating or ``colored'' MZVs (which lie in $\R$).
In this case we can streamline the notation and write, e.g., 
$\zt(\bar2,1)$ instead of $\L_{(2,1)}(-1,1)$.
Note that $\zt(\bar 1)=-\log 2$ and $\zt(\bar s)=(2^{1-s}-1)\zt(s)$
for $s\ge 2$.
For $r=2$, $t=1$ and $s\ge 2$, Eq. (\ref{polyl}) above simplifies to
\begin{align*}
\sum_{k=0}^\infty\zt(\{\bar s\}_k)\la^k&=
\exp\left(-\sum_{\text{$i$ even}}\frac{\la^i}{i}\zt(is)+
\sum_{\text{$i$ odd}}\frac{\la^i}{i}\zt(\overline{is})\right)\\
&=\exp\left(-\sum_{i=1}^\infty\frac{\la^i}{i}\zt(is)+2\sum_{\text{$i$ odd}}
\left(\frac{\la}{2^s}\right)^i\frac{\zt(is)}{i}\right)\\
&=Z_s(-\la)\exp\left(\sum_{\text{$i$ odd}}\left(\frac{\la}{2^s}\right)^i
\frac{\zt(is)}{i}\right)^2 ,
\end{align*}
where $Z_s(\la)$ is defined by Eq. (\ref{ztpwr}) above.
Now
\[
\sum_{\text{$i$ odd}}\left(\frac{\la}{2^s}\right)^i\frac{\zt(is)}{i}=
\sum_{i=1}^\infty\left(\frac{\la}{2^s}\right)^i(-1)^{i-1}\frac{\zt(is)}{i}
+\sum_{i=1}^\infty\left(\frac{\la}{2^s}\right)^{2i}\frac{\zt(2is)}{2i}
\]
so that
\[
\exp\left(
\sum_{\text{$i$ odd}}\left(\frac{\la}{2^s}\right)^i\frac{\zt(is)}{i}
\right)=\frac{Z_s\left(\frac{\la}{2^s}\right)}
{\sqrt{Z_{2s}\left(-\frac{\la^2}{4^s}\right)}}
\]
and thus
\[
\zt(\{\bar s\}_k)=\text{coefficient of $\la^k$ in}\
\frac{Z_s(-\la)Z_s\left(\frac{\la}{2^s}\right)^2}{Z_{2s}\left(-\frac{\la^2}{4^s}
\right)} ;
\]
cf. \cite[Eq. (12)]{BBB}.  In particular
\[
\sum_{k=0}^\infty\zt(\{\bar 2\}_k)\la^k=
\frac{Z_2(-\la)Z_2\left(\frac{\la}4\right)^2}
{Z_4\left(-\frac{\la^2}{16}\right)}=
\frac{2\cos(\pit\sqrt\la)\sinh(\pit\sqrt\la)}{\pi\sqrt\la} .
\]
Similarly, for $r=2$, $t=1$ and $s\ge 2$, Eq. (\ref{polyls}) above gives
\begin{multline*}
\sum_{k=0}^\infty\zt^{\star}(\{\bar s\}_k)\la^k=
\exp\left(\sum_{\text{$i$ even}}\frac{\la^i}{i}\zt(is)+
\sum_{\text{$i$ odd}}\frac{\la^i}{i}\zt(\overline{is})\right)\\
=\exp\left(\sum_{i=1}^\infty\frac{(-\la)^i}{i}\zt(is)+2\sum_{\text{$i$ odd}}
\left(\frac{\la}{2^s}\right)^i\frac{\zt(is)}{i}\right)
=\frac{Z_s\left(\frac{\la}{2^s}\right)^2}{Z_s(\la)Z_{2s}\left(-\frac{\la^2}{4^s}
\right)},
\end{multline*}
which for $s=2$ is
\[
\sum_{k=0}^\infty\zt^{\star}(\{\bar 2\}_k)\la^k=
\frac{Z_2\left(\frac{\la}{4}\right)^2}{Z_2(\la)Z_4\left(-\frac{\la^2}{16}\right)}
=\frac{\pi\sqrt\la}{2\cosh(\pit\sqrt\la)\sin(\pit\sqrt\la)} .
\]
In the case $s=1$ we have
\begin{multline}
\label{s1}
\sum_{k=0}^\infty\zt(\{\bar 1\}_k)\la^k=
\exp\left(-\sum_{\text{$i$ even}}\frac{\la^i}{i}\zt(i)+
\sum_{\text{$i$ odd}}\frac{\la^i}{i}\zt(\bar i)\right)=\\
\exp\left(-\sum_{i=2}^\infty\frac{\la^i}{i}\zt(i)-\la\log2+2\sum_{\text{$i\ge3$ odd}}
\left(\frac{\la}{2}\right)^i\frac{\zt(i)}{i}\right)
=\frac{Z_1(-\la)}{2^{\la}}\frac{Z_1\left(\frac{\la}2\right)^2}
{Z_2\left(-\frac{\la^2}4\right)} ,
\end{multline}
with $Z_1(\la)$ interpreted as
\[
\exp\left(\sum_{i=2}^\infty\frac{(-1)^{i-1}}{i}\la^i\zt(i)\right)=
\frac1{e^{\ga\la}\Ga(1+\la)},
\]
where $\ga$ is Euler's constant and $\Ga$ is the gamma function.
Using the duplication and reflection formulas for the gamma function,
Eq. (\ref{s1}) gives
\[
\sum_{k=0}^\infty\zt(\{\bar 1\}_k)\la^k=\frac{\sqrt\pi}{\Ga\left(\frac{1-\la}2
\right)\Ga\left(1+\frac{\la}2\right)};
\]
cf. \cite[Eq. (13)]{BBB}.
Similarly
\[
\sum_{k=0}^\infty\zt^{\star}(\{\bar 1\}_k)\la^k=
\frac1{\sqrt\pi}\Ga\left(\frac{1+\la}2\right)\Ga\left(1-\frac{\la}2\right) .
\]
\par
The remarkable identity 
\begin{equation}
\label{zhao}
\zt(\{\bar 2,1\}_n)=\frac1{8^n}\zt(\{3\}_n),
\end{equation}
was conjectured in \cite{BBB} but only proved by J. Zhao \cite{Zh1} 
13 years later.
As in Subsection \ref{sMZV} above, we can apply Theorem \ref{dblfrac}
(this time with $a=z_{2,1}$ and $b=z_{1,0}$) to get
\[
\sum_{n=0}^\infty\zt^{\star}(\{\bar2,1\}_n)\la^n=
\sum_{p=0}^\infty\zt(\{\bar2,1\}_p)\la^p
\sum_{q=0}^\infty\zt^{\star}(\{\bar3\}_q)\la^q,
\]
which by Eq. (\ref{zhao}) and the results above is
\begin{multline*}
\sum_{n=0}^\infty\zt^{\star}(\{\bar2,1\}_n)\la^n
=\frac{Z_3\left(\frac{\la}{8}\right)^3}
{Z_3(\la)Z_6\left(-\frac{\la^2}{64}\right)} =\\
\exp\left(\sum_{\text{$i\ge 1$ odd}}\frac{\zt(3i)\la^i}{i8^i}(3-8^i)\right)
\exp\left(\sum_{\text{$i\ge 2$ even}}\frac{\zt(3i)\la^i}{i8^i}(8^i-1)\right) .
\end{multline*}
\section*{Acknowledgments}
The authors would like to thank the directors of the Max-Planck-Institut
f\"ur Mathematik (MPIM) in Bonn for their hospitality in early 2012, when 
they began work on this project and wrote a preliminary report \cite{HI}.
In 2013 and 2014, the first author received partial support from the
Naval Academy Research Council.  The second author was supported by
JSPS KAKENHI grant number 26400013.  The authors thank the referee for
his or her careful reading and thoughtful review of the manuscript.


\begin{thebibliography}{9}
\bibitem{A}
M. Aguiar, Infinitesimal Hopf algebras, in 
\emph{New Trends in Hopf Algebra Theory} (La Falda, 1999), 
N. Andruskiewitsch {\it et. al.} (eds.), Contemporary Math. 267, 
Amer. Math. Soc., Providence, RI, 2000, pp. 1-29.
\bibitem{BBB}
J. M. Borwein, D. M. Bradley, and D. J. Broadhurst, Evaluation of
$k$-fold Euler/Zagier sums:  a compendium for arbitrary $k$,
\emph{Electron. J. Combin.} {\bf 4(2)} (1997), paper R5.
\bibitem{BBBL}
J. M. Borwein, D. M. Bradley, D. J. Broadhurst, and P. Lison\v ek,
Combinatorial aspects of multiple zeta values, \emph{Electron. J. Combin.} 
{\bf 5} (1998), paper R38.
\bibitem{B}
D. M. Bradley, Multiple $q$-zeta values, \emph{J. Algebra} {\bf 283}
(2005), 752-798.
\bibitem{G}
A. Granville, A decomposition of Riemann's zeta function, in 
\emph{Analytic Number Theory}, Y. Motohashi (ed.), London Math. Soc.
Lect. Notes 247, Cambridge University Press, Cambridge, 1997, pp. 95-101.
\bibitem{H0}
M. E. Hoffman, Multiple harmonic series, \emph{Pacific J. Math.} {\bf 152}
(1992), 175-190.
\bibitem{H2}
M. E. Hoffman, Quasi-shuffle products, \emph{J. Algebraic Combin.}
{\bf 11} (2000), 49-68.
\bibitem{H3}
M. E. Hoffman, The Hopf algebra structure of multiple harmonic sums,
\emph{Nuclear Phys. B (Proc. Suppl.)} {\bf 135} (2004), 215-219.
\bibitem{H4}
M. E. Hoffman, On multiple zeta values of even arguments, \emph{Int. J. 
Number Theory} {\bf 13} (2017), 705-716.
\bibitem{H5}
M. E. Hoffman, An odd variant of multiple zeta values, preprint
{\tt arXiv:1612.06563}.
\bibitem{HI}
M. E. Hoffman and K. Ihara, Quasi-shuffle products revisited,
MPIM preprint 2012-16.
\bibitem{IKOO}
K. Ihara, J. Kajikawa, Y. Ohno, and J. Okuda, Multiple zeta values vs.
multiple zeta-star values, \emph{J. Algebra} {\bf 332} (2011), 
187-208.
\bibitem{IKZ}
K. Ihara, M. Kaneko, and D. Zagier, Derivation and double shuffle
relations for multiple zeta values, \emph{Compositio Math.} {\bf 142}
(2006), 307-338.
\bibitem{J}
N. Jacobson, \emph{Lie Algebras}, Dover, New York, 1979.
\bibitem{M}
D. Manchon, Hopf algebras in renormalization, in \emph{Handbook of Algebra}, 
vol. 5, M. Hazewinkel (ed.), Elsevier, Amsterdam, 2008, pp. 365-427.
\bibitem{Y1}
S. Yamamoto, Interpolation of multiple zeta values and zeta-star values,
\emph{J. Algebra} {\bf 385} (2013), 102-114.
\bibitem{Y2}
S. Yamamoto, Hoffman-Ihara operators and the formal power series operad
[Japanese], Proceedings of the 6th Workshop on Multiple Zeta (Feb. 22-24,
2013), Kyushu Univ., Japan.
\bibitem{Zh1}
J. Zhao, On a conjecture of Borwein, Bradley and Broadhurst, 
\emph{J. reine angew. Math.} {\bf 639} (2010), 223-233.
\bibitem{Zh2}
J. Zhao, Sum formula of multiple Hurwitz-zeta values, \emph{Forum Math.}
{\bf 27} (2015), 929-936.
\bibitem{Zh3}
J. Zhao, Identity families of multiple harmonic sums and multiple
zeta-star values, \emph{J. Math. Soc. Japan} {\bf 68} (2016), 1669-1694.
\end{thebibliography}
\end{document}